\newtheorem{thm}{Theorem}[section]
\newtheorem{defn}[thm]{Definition}
\newtheorem{lem}[thm]{Lemma}
\newtheorem{prop}[thm]{Proposition}
\newtheorem{theorem}[thm]{Theorem}
\newtheorem{proposition}[thm]{Proposition}
\newtheorem{definition}[thm]{Definition}
\numberwithin{equation}{section}
\begin{document}

\title[Algebra of slice regular functions]{Algebra of slice regular functions\\ on non-symmetric domains in several quaternionic variables}
\author{Xinyuan Dou}
\email[Xinyuan Dou]{douxinyuan@ustc.edu.cn}
\address{Department of Mathematics, University of Science and Technology of China, Hefei 230026, China}
\address{Institute of Mathematics, AMSS, Chinese Academy of Sciences, Beijing 100190, China}
\author{Ming Jin\textsuperscript{\Letter}}
\email[Ming Jin]{mjin@must.edu.mo}
\address{Faculty of Innovation Engineering, Macau University of Science and Technology, Macau, China} 
\author{Guangbin Ren}
\email[Guangbin Ren]{rengb@ustc.edu.cn}
\address{Department of Mathematics, University of Science and Technology of China, Hefei 230026, China}
\author{Ting Yang}
\email[Ting Yang]{tingy@aqnu.edu.cn}
\address{School of Mathematics and Physics, Anqing Normal University, Anqing 246133, China}
\keywords{several quaternionic variables; slice regular functions; $*$-product; non-axially symmetric domains; slice topology}
\thanks{This work was supported by the China Postdoctoral Science Foundation (2021M703425), the NNSF of China (12171448), the Faculty Research Grants of the Macau University of Science and Technology (FRG-23-034-FIE), and Xiaomi Young Talents Program.}

\subjclass[2020]{Primary: 30G35; Secondary: 32A30}

\begin{abstract}
The primary objective of this paper is to establish an algebraic framework for the space of weakly slice regular functions over several quaternionic variables. We recently introduced a $*$-product that maintains the path-slice property within the class of path-slice functions. It is noteworthy that this $*$-product is directly applicable to weakly slice regular functions, as every slice regular function defined on a slice-open set inherently possesses path-slice properties.
Building on this foundation, we propose a precise definition of an open neighborhood for a path $\gamma$ in the path space $\mathscr{P}(\mathbb{C}^n)$. This definition is pivotal in establishing the holomorphism of stem functions. Consequently, we demonstrate that the $*$-product of two weakly slice regular functions retains its weakly slice regular nature. This retention is facilitated by holomorphy of stem functions and their relationship with weakly slice regular functions, providing a comprehensive algebraic structure for this class of functions.

\end{abstract}

\maketitle

\section{Introduction}

Slice analysis extends the theory of holomorphic functions in one complex variable to higher dimensions, offering a bridge to multifaceted mathematical disciplines. Initially introduced in the quaternions context by Gentili and Struppa in 2006 \cite{Gentili2006001, Gentili2007001}, this theory has since expanded, revealing that power series within non-commutative quaternion algebras qualify as slice regular functions, thus broadening the scope of slice analysis.

For those interested in the diverse applications and advancements in slice analysis, we recommend various sources: for geometric function theory, see \cite{Ren2017001,Ren2017002,Wang2017001}; for quaternionic Schur analysis, refer to \cite{Alpay2012001}; for insights into quaternionic operator theory, consult \cite{Alpay2015001,MR3887616,MR3967697,Gantner2020001,MR4496722}. Further extensions of slice analysis include its application to real Clifford algebras \cite{Colombo2009002}, octonions \cite{Gentili2010001}, real alternative $*$-algebras \cite{Ghiloni2011001}, and $2n$-dimensional Euclidean spaces \cite{Dou2023002}.

Two principal methodologies have emerged for exploring the several-variable aspect of slice analysis, both originating from the theories in one variable. The first, strongly slice regular functions, focuses on real alternative $*$-algebras in one variable \cite{Ghiloni2011001} and is characterized by its reliance on intrinsic stem functions and its applicability to axially symmetric domains. For several-variable extensions, see \cite{Colombo2012002} for the quaternionic context, and \cite{Ghiloni2012001} for real Clifford algebras and \cite{Ghiloni2020001} for real alternative $*$-algebras. The second approach, weakly slice regular functions, involves functions satisfying the Cauchy-Riemann equations within each complex plane slice, a concept originating from the initial definition of slice regular functions \cite{Gentili2007001}. Recent advancements in this area, such as the introduction of slice topology, have facilitated the study of non-axially symmetric domains, leading to comprehensive global theories \cite{Dou2023001}. For a broader understanding, refer to \cite{Dou2023001, Dou2021001} for general settings, and \cite{Dou2021001} for the octonions context.

This paper delves into the algebraic structure of weakly slice regular functions across multiple quaternionic variables, with a focus on identifying a suitable multiplication method that maintains slice regularity. Historically, two approaches to defining the $*$-product have been proposed: one based on the unique slice regular extension of the $*$-product between restricted functions on specific complex planes \cite{Gentili2008001,Colombo2009001}, and another defined via a corresponding multiplication on their stem functions \cite{Ghiloni2011001}. Both approaches, traditionally confined to axially symmetric slice domains, are equivalent in such contexts. However, the conventional extension theorem \cite{Colombo2009001} proves insufficient for non-axially symmetric slice domains. Consequently, we propose utilizing the second method for defining the $*$-product, fulfilling the necessary conditions. In \cite{Dou2023003}, we introduced a $*$-product that preserves the path-slice property, thus encompassing $*$-products of slice regular functions, as each slice regular function on a slice-open set is inherently path-slice. It's important to note that the criteria for a function to qualify as a stem function of a slice function are straightforward in axially symmetric slice domains. In these domains, the stem function is uniquely representable through the slice function, as per the representation formula. However, this becomes significantly more complex in non-axially symmetric slice domains. We addressed this challenge in \cite{Dou2023003}, offering auxiliary conditions to facilitate the process.

The primary focus of this paper is the slice regularity of the $*$-product of weakly slice regular functions. Central to this discussion is the holomorphy of stem functions. To apply differential operators to stem functions, we define an appropriate open neighborhood of a path $\gamma$ in the path space $\mathscr{P}(\mathbb{C}^n)$. In weak slice analysis, the corresponding stem function of a path-slice function is defined along paths, and the compatibility of stem function holomorphy with weak slice regular functions is a crucial consideration.

The paper is structured as follows: Section \ref{sec-preliminaries} revisits key results from \cite{Dou2023003}, including the relationship between path-slice functions and their stem functions, and the $*$-product between path-slice functions. Section \ref{sec-holo stem functions} introduces holomorphic stem functions, defining a suitable open neighborhood for paths in the path space $\mathscr{P}(\mathbb{C}^n)$ and establishing the holomorphy of stem functions corresponding to slice regular functions. Finally, Section \ref{sec-star product} demonstrates that the $*$-product of two slice regular functions retains slice regularity, thereby confirming that the space of weakly slice regular functions, when equipped with the $*$-product, forms an algebra.

\section{Preliminaries}\label{sec-preliminaries}

This section revisits certain key concepts in the study of slice regular functions and elaborates on the $*$-product of path-slice functions as presented in \cite{Dou2023003}. The quaternion algebra is denoted by $\mathbb{H}$. The set of imaginary units of quaternions is represented as:
\begin{equation*}
	\mathbb{S} := \{I \in \mathbb{H} : I^2 = -1\}.
\end{equation*}
For $n\in\mathbb{N}$, the $n$-dimensional weakly slice cone is defined as:
\begin{equation*}
	\mathbb{H}_s^n := \bigcup_{I \in \mathbb{S}} \mathbb{C}_I^n,
\end{equation*}
with slice topology
\begin{equation*}
	\tau_s(\mathbb{H}^n_s) := \{\Omega \subset \mathbb{H}^n_s : \Omega_I \in \tau(\mathbb{C}_I^n),\ \forall\ I \in \mathbb{S}\},
\end{equation*}
where $\mathbb{C}_I = \mathbb{R} + \mathbb{R}I$ and $\mathbb{C}_I^n = (\mathbb{C}_I)^n$
\begin{equation*}
	\Omega_I := \Omega \cap \mathbb{C}_I^n.
\end{equation*}
Open sets, connected sets, and paths in $\tau_s$ are termed slice-open sets, slice-connected sets, and slice-paths in $\Omega$, respectively.

For $I \in \mathbb{S}$, define:
\begin{equation*}
	\begin{split}
		\Psi_i^I:\quad\mathbb{C}^n\quad &\xlongrightarrow[\hskip1cm]{}\quad \mathbb{C}_I^n,
		\\   x+yi\ &\shortmid\! \xlongrightarrow[\hskip1cm]{}\ x+yI.
	\end{split}
\end{equation*}

The set of continuous paths with real-valued start points in $\mathbb{C}^n$ is denoted as:
\begin{equation*}
	\mathscr{P}(\mathbb{C}^n) := \{\gamma : [0,1] \rightarrow \mathbb{C}^n : \gamma \text{ is continuous with } \gamma(0) \in \mathbb{R}^n\}.
\end{equation*}
For a subset $\Omega \subset \mathbb{H}_s^n$, we define:
\begin{equation*}
	\mathscr{P}(\mathbb{C}^n,\Omega) := \left\{\delta \in \mathscr{P}(\mathbb{C}^n) : \exists\ I \in \mathbb{S}, \text{ such that } \delta^{I} \subset \Omega\right\},
\end{equation*}
where $\delta^I := \Psi_i^I(\delta)$ is a path in $\Omega_I$. For a fixed path $\gamma \in \mathscr{P}(\mathbb{C}^n)$, we define:
\begin{equation*}
	{\mathbb{S}(\Omega,\gamma)} := \{I \in \mathbb{S} : \gamma^{I} \subset \Omega \}.
\end{equation*}

\begin{definition}
	A function $f:\Omega \rightarrow \mathbb{H}$, for $\Omega \subset \mathbb{H}_s^n$, is termed \textit{\textbf{path-slice}} if it correlates with a function $F:\mathscr{P}(\mathbb{C}^n,\Omega) \rightarrow \mathbb{H}^{2 \times 1}$ such that:
	\begin{equation}\label{eq-fcg}
		f \circ \gamma^{I}(1) = (1, I) F(\gamma),
	\end{equation}
	for every $\gamma \in \mathscr{P}(\mathbb{C}^n, \Omega)$ and $I \in \mathbb{S}(\Omega, \gamma)$. The function $F$ is called a path-slice stem function of $f$. The set of path-slice functions defined on $\Omega$ is denoted as $\mathcal{PS}(\Omega)$, and $\mathcal{PSS}(f)$ denotes the set of path-slice stem functions of $f \in \mathcal{PS}(\Omega)$.
\end{definition}

\begin{definition}
	For $\Omega$ in the slice topology $\tau_s(\mathbb{H}_s^n)$, a function $f:\Omega \rightarrow \mathbb{H}$ is considered \textit{\textbf{(weakly) slice regular}} if, for every $I \in \mathbb{S}$, the restriction $f_I := f|{\Omega_I}$ is (left $I$-) holomorphic. This means $f_I$ is real differentiable and satisfies:
	\begin{equation*}
		\frac{1}{2}\left(\frac{\partial}{\partial x_\ell} + I \frac{\partial}{\partial y_\ell}\right) f_I(x + yI) = 0
	\end{equation*}
on $\Omega_I$ 
	for each $\ell = 1, 2, ..., d$. The class of weakly slice regular functions on $\Omega$ is denoted as $\mathcal{SR}(\Omega)$.
\end{definition}

According to \cite[Corollary 5.9]{Dou2023002}, weakly slice regular functions qualify as path-slice.

\begin{definition}
	A subset $\Omega \subset \mathbb{H}_s^n$ is defined as \textit{\textbf{real-path-connected}} if, for each point $q \in \Omega$, there exists a path $\gamma \in \mathscr{P}(\mathbb{C}^n, \Omega)$ and an element $I \in \mathbb{S}(\Omega, \gamma)$ such that $\gamma^I(1) = q$.
\end{definition}

Given a subset $\Omega \subset \mathbb{H}_s^n$, denote
\begin{equation}\label{eq-mp*2}
	\mathscr{P}_*^2(\mathbb{C}^n, \Omega) := \{(\alpha, \beta) \in [\mathscr{P}(\mathbb{C}^n, \Omega)]^2 : \alpha(1) = \beta(1) \}.
\end{equation}

\begin{definition}
	For subsets $\Omega_1, \Omega_2 \subset \mathbb{H}_s^n$, $\Omega_2$ is termed \textit{\textbf{$\Omega_1$-stem-preserving}} if it satisfies the following conditions:
	\begin{enumerate}[\upshape (i)]
		\item $\left|\mathbb{S}(\Omega_2, \gamma)\right| \geqslant  2$, for every $\gamma \in \mathscr{P}(\mathbb{C}^n, \Omega_1)$.
		\item $\left|\mathbb{S}(\Omega_2, \alpha) \cap \mathbb{S}(\Omega_2, \beta)\right| \neq 1$, for each $(\alpha, \beta) \in \mathscr{P}_*^2(\mathbb{C}^n, \Omega_1)$.
	\end{enumerate}
\end{definition}

Based on [1, Proposition 3.7], the subsequent definition is deemed well-defined:

\begin{definition}
	For a real-path-connected subset $\Omega_1 \subset \mathbb{H}_s^n$, an $\Omega_1$-stem-preserving subset $\Omega_2 \subset \mathbb{H}_s^n$, and a path-slice function $f:\Omega_2 \rightarrow \mathbb{H}$, the following mapping 
	\begin{equation}\label{eq-F def}
		\begin{split}
			F_{\Omega_1}^f :  \mathscr{P}(\mathbb{C}^n, \Omega_1) &\xlongrightarrow[\hskip1cm]{} \mathbb{H}^{2 \times 1}
			\\     \gamma &\shortmid\!\xlongrightarrow[\hskip1cm]{}G|_{\mathscr{P}(\mathbb{C}^n, \Omega_1)},
		\end{split}
	\end{equation}
	is well defined and independent of the chosen $G \in \mathcal{PSS}(f)$.
\end{definition}

\begin{proposition}\cite[Proposition 3.3]{Dou2023003}.
	Given a domain $\Omega\subset\mathbb{H}_s^n$, a function $f\in\mathcal{PS}(\Omega)$, a path $\gamma\in\mathscr{P}(\mathbb{C}^n,\Omega)$, and a path-slice stem function $F$ associated with $f$, for any distinct $I, J\in\mathbb{S}(\Omega,\gamma)$, it holds that
	\begin{equation}\label{eq-fgbp}
		F(\gamma) = \begin{pmatrix}
			1 & I \\
			1 & J
		\end{pmatrix}^{-1} \begin{pmatrix}
			f\circ\gamma^I(1) \\
			f\circ\gamma^J(1)
		\end{pmatrix}.
	\end{equation}
\end{proposition}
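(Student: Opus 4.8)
The plan is to read the claimed identity directly off the defining relation of a path-slice stem function, so that the proof reduces to inverting a single $2\times 2$ quaternionic matrix. By the definition of a path-slice function, the stem function $F$ satisfies $f\circ\gamma^K(1) = (1,K)F(\gamma)$ for every $K\in\mathbb{S}(\Omega,\gamma)$. The first step is to instantiate this relation at the two distinct units $I$ and $J$ and to stack the two resulting equations into the single matrix identity
\begin{equation*}
\begin{pmatrix} f\circ\gamma^I(1) \\ f\circ\gamma^J(1) \end{pmatrix} = \begin{pmatrix} 1 & I \\ 1 & J \end{pmatrix} F(\gamma),
\end{equation*}
where the right-hand side is the ordinary matrix product with quaternionic entries, the column $F(\gamma)\in\mathbb{H}^{2\times 1}$ being multiplied on the left by the coefficient matrix.

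The second, and genuinely load-bearing, step is to invert $M := \begin{pmatrix} 1 & I \\ 1 & J \end{pmatrix}$. Since $I\neq J$ and $\mathbb{H}$ is a division algebra, the quaternion $I-J$ is nonzero and hence invertible; setting $w := (I-J)^{-1}$ I would exhibit the explicit candidate
\begin{equation*}
M^{-1} = \begin{pmatrix} 1 - Iw & Iw \\ w & -w \end{pmatrix}
\end{equation*}
and confirm $M^{-1}M = \Id$ by direct multiplication, the only nonobvious cancellations being the two occurrences of $w(I-J) = 1$. Since $M^{-1}$ is a genuine left inverse, left-multiplying the stacked identity by $M^{-1}$ isolates $F(\gamma)$ and produces exactly the asserted formula.

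The main obstacle, such as it is, lies entirely in the noncommutativity of $\mathbb{H}$: one cannot invoke a determinant formula valid only over a commutative ring, and one must respect that $M$ acts on $F(\gamma)$ by left multiplication, so that it is a left inverse of $M$ that recovers $F(\gamma)$. Verifying $M^{-1}M = \Id$ rather than $MM^{-1} = \Id$ is what settles this point, and the computation is routine once the order of the factors is kept fixed. No appeal to the topology of $\Omega$, to holomorphy, or to slice regularity is needed: the statement is a purely pointwise, linear-algebraic consequence of the definition of a path-slice stem function.
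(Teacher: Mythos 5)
Your proposal is correct: the stacked identity follows immediately from instantiating the defining relation $f\circ\gamma^K(1)=(1,K)F(\gamma)$ at $K=I,J$, your candidate left inverse of $\begin{pmatrix}1&I\\1&J\end{pmatrix}$ does verify (both cancellations reduce to $w(I-J)=1$ with $w=(I-J)^{-1}$, which exists since $\mathbb{H}$ is a division ring and $I\neq J$), and associativity of matrix multiplication over $\mathbb{H}$ lets the left inverse isolate $F(\gamma)$. The paper itself gives no proof --- the proposition is imported verbatim from [Dou2023003, Proposition 3.3] --- but your purely linear-algebraic derivation is exactly the argument the displayed formula encodes, including the correct care about noncommutativity and the side on which the inverse must act.
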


Define the function $\mathfrak{I}: \mathbb{H}_s^n \rightarrow \mathbb{S}\cup{0}$ for $q=(q_1,...,q_n)\in\mathbb{H}_s^n$ by
\begin{equation*}
	\mathfrak{I}(q) = \begin{cases}
		0, & \text{if } q\in\mathbb{R}^n, \\
		\displaystyle\frac{q_\imath-Re(q_\imath)}{|q_\imath-Re(q_\imath)|}, & \text{otherwise},
	\end{cases}
\end{equation*}
where $\imath\in\{1,...,n\}$ is the smallest index such that $q_\imath\notin\mathbb{R}$.

From \cite[Proposition 3.11]{Dou2023003}, we define:

\begin{definition}
	For a real-path-connected domain $\Omega_1\subset\mathbb{H}_s^n$ and an $\Omega_1$-stem-preserving domain $\Omega_2\subset\mathbb{H}_s^n$, if $f:\Omega_2\rightarrow\mathbb{H}$ is path-slice, then
	\begin{equation}\label{eq-mathscr f def}
		\mathscr{F}_{\Omega_1}^f(q) = \begin{cases}
			F_{\Omega_1}^f(\gamma), & \text{if } q\notin\mathbb{R}^n, \\
			(f(q), 0)^T, & \text{otherwise},
		\end{cases}
	\end{equation}
	is well-defined for $q\in\Omega_1$, where $\gamma\in\mathscr{P}(\mathbb{C}^n,\Omega_1)$ satisfies $\gamma^{\mathfrak{I}(q)}\subset\Omega_1$ and $\gamma^{\mathfrak{I}(q)}(1)=q$.
\end{definition}

\begin{defn}
	Given real-path-connected $\Omega_1\subset\mathbb{H}_s^n$, an $\Omega_1$-stem-preserving $\Omega_2\subset\mathbb{H}_s^n$, and functions $f\in\mathcal{PS}(\Omega_1)$, $g\in\mathcal{PS}(\Omega_2)$, the $*$-product of $f$ and $g$ is defined as
	\begin{equation}\label{eq-starproduct def}
		f*g := (f,\mathfrak{I}f) \mathscr{F}_{\Omega_1}^{g}:\Omega_1\rightarrow\mathbb{H}.
	\end{equation}
\end{defn}

For $p:=(p_1,p_2)^T, q:=(q_1,q_2)^T\in \mathbb{H}^{2\times 1}$, denote the product $p*q$ as
\begin{equation*}
	p*q := (p_1\mathbb{I} + p_2\sigma)(q_1\mathbb{I} + q_2\sigma)e_1,
\end{equation*}
where $\mathbb{I}$, $\sigma$, and $e_1$ are defined as
\begin{equation*}
	\mathbb{I} := \begin{pmatrix}
		1 & 0 \\
		0 & 1
	\end{pmatrix},\
	\sigma := \begin{pmatrix}
		0 & -1 \\
		1 & 0
	\end{pmatrix},\
	e_1 := \begin{pmatrix}
		1 \\
		0
	\end{pmatrix}.
\end{equation*}

For functions $F, G: \mathscr{P}(\mathbb{C}^n,\Omega)\rightarrow\mathbb{H}^{2\times 1}$ for a domain $\Omega\subset\mathbb{H}_s^n$, define
\begin{equation*}
	F*G(\gamma) := F(\gamma) * G(\gamma),\qquad\forall\ \gamma\in\mathscr{P}(\mathbb{C}^n,\Omega).
\end{equation*}

\begin{lem} \cite[Lemma 4.2]{Dou2023003}. Let $\Omega_1 \subset\mathbb{H}_s^n$  
be  real-path-connected, and $\Omega_2 \subset\mathbb{H}_s^n$   be $\Omega_1$-stem-preserving domain.   For $c\in\mathbb{H}$, $g\in\mathcal{PS}(\Omega_2)$, a path $\gamma\in\mathscr{P}(\mathbb{C}^n,\Omega_1)$, and $I\in\mathbb{S}(\Omega_1,\gamma)$ where $q:=\gamma^I(1)$, it follows that
	\begin{equation}\label{eq-overline}
		(c,Ic) F_{\Omega_1}^{g}(\gamma) = (c,-Ic) F_{\Omega_1}^{g}(\overline{\gamma}) = \left(c,\mathfrak{I}(q)c\right) \mathscr{F}_{\Omega_1}^{g}(q).
	\end{equation}
\end{lem}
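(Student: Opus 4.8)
The plan is to prove the two equalities separately, after first recording two structural facts that do all the work: the behaviour of the stem function under path conjugation, and the explicit form of $\mathfrak{I}(q)$. The starting observation is the elementary identity $\Psi_i^J(\overline{w})=\Psi_i^{-J}(w)$, which gives $\overline{\gamma}^{J}=\gamma^{-J}$ for every $J\in\mathbb{S}$. In particular $\overline{\gamma}^{-I}=\gamma^{I}\subset\Omega_1$, so $\overline{\gamma}\in\mathscr{P}(\mathbb{C}^n,\Omega_1)$ and $F_{\Omega_1}^g(\overline{\gamma})$ is defined, while $\mathbb{S}(\Omega_2,\overline{\gamma})=-\mathbb{S}(\Omega_2,\gamma)$. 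Next I would compute $\mathfrak{I}(q)$ directly: writing $\gamma(1)=(x_\ell+y_\ell i)_\ell$ gives $q=\gamma^I(1)=(x_\ell+y_\ell I)_\ell$, so $q_\ell\in\mathbb{R}$ exactly when $y_\ell=0$; if $\imath$ is the least index with $q_\imath\notin\mathbb{R}$ then $q_\imath-\re(q_\imath)=y_\imath I$, whence $\mathfrak{I}(q)=\tfrac{y_\imath}{|y_\imath|}I\in\{I,-I\}$, the sign being that of $y_\imath$ (and $\mathfrak{I}(q)=0$ precisely when $q\in\mathbb{R}^n$). This is the decisive simplification: $\mathfrak{I}(q)$ is always $\pm I$, so $\mathscr{F}_{\Omega_1}^g(q)$ can be represented by either $\gamma$ or $\overline{\gamma}$.

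Then I would establish the conjugation relation. Writing $F_{\Omega_1}^g(\gamma)=(a,b)^T$ and using $F_{\Omega_1}^g=G|_{\mathscr{P}(\mathbb{C}^n,\Omega_1)}$ for $G\in\mathcal{PSS}(g)$, the path-slice identity for each $J\in\mathbb{S}(\Omega_2,\gamma)$ reads $a+Jb=g\circ\gamma^J(1)=g\circ\overline{\gamma}^{-J}(1)=(1,-J)F_{\Omega_1}^g(\overline{\gamma})$. Since $\Omega_2$ is $\Omega_1$-stem-preserving, condition (i) gives $|\mathbb{S}(\Omega_2,\gamma)|\geqslant 2$; feeding two distinct units into this identity and subtracting forces $F_{\Omega_1}^g(\overline{\gamma})=(a,-b)^T$. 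The first asserted equality is then the one-line check $(c,Ic)(a,b)^T=ca+Icb=(c,-Ic)(a,-b)^T$.

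For the second equality I would split according to $\mathfrak{I}(q)$. If $q\in\mathbb{R}^n$ then $\gamma(1)$ is real, so $\gamma^J(1)=q$ for all $J$ and $q\in\Omega_2$ (as condition (i) supplies some $J\in\mathbb{S}(\Omega_2,\gamma)$); then $g(q)=a+Jb$ for the at least two such $J$, forcing $b=0$ and $a=g(q)$, and both sides reduce to $cg(q)=(c,0)(g(q),0)^T$. If $q\notin\mathbb{R}^n$ and $\mathfrak{I}(q)=I$, then $\gamma^{\mathfrak{I}(q)}=\gamma^I\subset\Omega_1$ with $\gamma^{\mathfrak{I}(q)}(1)=q$, so $\gamma$ is an admissible representative in the definition of $\mathscr{F}$ and, by the independence of $\mathscr{F}_{\Omega_1}^g(q)$ from the chosen representative, $\mathscr{F}_{\Omega_1}^g(q)=F_{\Omega_1}^g(\gamma)$, making the claim immediate. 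If instead $\mathfrak{I}(q)=-I$, then $\overline{\gamma}^{\mathfrak{I}(q)}=\overline{\gamma}^{-I}=\gamma^I\subset\Omega_1$ with endpoint $q$, so $\overline{\gamma}$ is admissible and $\mathscr{F}_{\Omega_1}^g(q)=F_{\Omega_1}^g(\overline{\gamma})$; hence $(c,\mathfrak{I}(q)c)\mathscr{F}_{\Omega_1}^g(q)=(c,-Ic)F_{\Omega_1}^g(\overline{\gamma})$, which equals $(c,Ic)F_{\Omega_1}^g(\gamma)$ by the first equality.

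The main obstacle, and the only genuinely quaternionic point, is the conjugation relation: because $Ic\neq cI$ in general one cannot factor $c$ out of $(c,Ic)$, so the equalities must be matched componentwise, and the passage from $F_{\Omega_1}^g(\gamma)$ to $F_{\Omega_1}^g(\overline{\gamma})$ rests essentially on having at least two slice units available, i.e.\ on stem-preserving condition (i). Everything else is bookkeeping around the identities $\overline{\gamma}^{J}=\gamma^{-J}$ and $\mathfrak{I}(q)=\pm I$.
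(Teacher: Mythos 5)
Your proof is correct. Note that the paper itself imports this lemma from \cite[Lemma 4.2]{Dou2023003} without reproducing a proof, so there is no in-paper argument to compare against; your derivation is the natural one implicit in the paper's definitions, and its three pillars are exactly the right ones: the conjugation identity $\overline{\gamma}^{\,J}=\gamma^{-J}$ giving $F_{\Omega_1}^{g}(\overline{\gamma})=(a,-b)^T$ via two distinct units supplied by stem-preserving condition (i), the computation $\mathfrak{I}(q)\in\{0,I,-I\}$, and the representative-independence of $\mathscr{F}_{\Omega_1}^{g}$ (which you correctly invoke as given, since it is part of the quoted well-definedness from \cite{Dou2023003}). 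You also correctly handle the small points a careless write-up would miss: that $\overline{\gamma}\in\mathscr{P}(\mathbb{C}^n,\Omega_1)$ must be checked via $\overline{\gamma}^{-I}=\gamma^I$, and that in the real-endpoint case $q\in\Omega_2$ (so $g(q)$ makes sense) and $b=0$ both follow from condition (i).
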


\begin{definition}
	A domain $\Omega\subset\mathbb{H}_s^n$ is termed self-stem-preserving if it is real-path-connected and $\Omega$-stem-preserving.
\end{definition}

\begin{theorem}\label{tm-aura}\cite[Theorem 5.3]{Dou2023003}.
	If a domain $\Omega\subset\mathbb{H}_s^n$ is self-stem-preserving, then the structure $(\mathcal{PS}(\Omega),+,*)$ forms an associative unitary real algebra.
\end{theorem}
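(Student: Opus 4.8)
The plan is to prove all the algebra axioms at once by transporting them to the level of stem functions, using that the assignment $f\mapsto F_\Omega^f$ is faithful. Since $\Omega$ is self-stem-preserving we may specialize $\Omega_1=\Omega_2=\Omega$ everywhere, so that for each $f\in\mathcal{PS}(\Omega)$ the map $F_\Omega^f:\mathscr{P}(\mathbb{C}^n,\Omega)\to\mathbb{H}^{2\times1}$ of \eqref{eq-F def} is an honest path-slice stem function of $f$. First I would record two elementary facts. (i) The correspondence $f\mapsto F_\Omega^f$ is $\mathbb{R}$-linear: it is independent of the chosen $G\in\mathcal{PSS}(f)$, and path-slice stem functions of $f_1+f_2$ and $rf$ (for $r\in\mathbb{R}$) are obtained by adding, respectively scaling, those of the summands. (ii) It is injective: because $\Omega$ is real-path-connected, every $q\in\Omega$ equals $\gamma^I(1)$ for some $\gamma$ and $I\in\mathbb{S}(\Omega,\gamma)$, and then $f(q)=(1,I)F_\Omega^f(\gamma)$ by \eqref{eq-fcg}, so $F_\Omega^f$ determines $f$. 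Consequently $(\mathcal{PS}(\Omega),+)$ is a real vector space and it suffices to control the $*$-product through stem functions.

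The heart of the matter is the identity
\[
 F_\Omega^{f*g}\;=\;F_\Omega^f * F_\Omega^g,
\]
where the right-hand side is the pointwise matrix $*$-product of functions. To prove it I would fix $\gamma\in\mathscr{P}(\mathbb{C}^n,\Omega)$ and $I\in\mathbb{S}(\Omega,\gamma)$, set $q:=\gamma^I(1)$, and write $F_\Omega^f(\gamma)=(a,b)^T$, $F_\Omega^g(\gamma)=(c,d)^T$, so that $f(q)=a+Ib$. By \eqref{eq-starproduct def} one has $(f*g)(q)=(f(q),\mathfrak{I}(q)f(q))\mathscr{F}_\Omega^g(q)$, and the equality between the first and last members of \eqref{eq-overline} (with $c=f(q)$) rewrites this as $(f(q),If(q))F_\Omega^g(\gamma)$. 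A short quaternionic computation, using $I^2=-1$ together with the explicit matrix product $u*v=(u_1v_1-u_2v_2,\,u_2v_1+u_1v_2)^T$, then yields $(f*g)(q)=(1,I)(F_\Omega^f*F_\Omega^g)(\gamma)$, the right-hand side being $(ac-bd)+I(bc+ad)$. This says exactly that $F_\Omega^f*F_\Omega^g$ satisfies \eqref{eq-fcg} for $f*g$; hence $f*g\in\mathcal{PS}(\Omega)$ (closure), and by \eqref{eq-F def} its canonical stem function $F_\Omega^{f*g}$ is precisely $F_\Omega^f*F_\Omega^g$.

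With this identity the remaining axioms transfer mechanically once the matrix $*$-product is understood abstractly. Here I would observe that $\Phi(p):=p_1\mathbb{I}+p_2\sigma$ embeds $\mathbb{H}^{2\times1}$ $\mathbb{R}$-linearly and bijectively onto the associative matrix algebra $\mathcal{A}:=\{x\mathbb{I}+y\sigma:x,y\in\mathbb{H}\}$, which is closed under multiplication because $\sigma^2=-\mathbb{I}$. Since $\Phi(p)e_1=p$ and $\Phi(Me_1)=M$ for every $M\in\mathcal{A}$, one gets $\Phi(p*q)=\Phi(p)\Phi(q)$, so $\Phi$ is an algebra isomorphism; thus $(\mathbb{H}^{2\times1},*)$ is associative and $\mathbb{R}$-bilinear with two-sided unit $e_1=(1,0)^T$. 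Transporting through $f\mapsto F_\Omega^f$: associativity follows from $F_\Omega^{(f*g)*h}=(F_\Omega^f*F_\Omega^g)*F_\Omega^h=F_\Omega^f*(F_\Omega^g*F_\Omega^h)=F_\Omega^{f*(g*h)}$ and injectivity; $\mathbb{R}$-bilinearity follows in the same way from the linearity of $f\mapsto F_\Omega^f$ and the bilinearity of $*$ on $\mathbb{H}^{2\times1}$; and the constant function $\mathbf 1\equiv1$, whose stem function is $(1,0)^T$, is the unit because $(1,0)^T$ is the unit of $(\mathbb{H}^{2\times1},*)$.

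I expect the one genuinely delicate step to be the key identity $F_\Omega^{f*g}=F_\Omega^f*F_\Omega^g$, and within it the passage from the pointwise definition \eqref{eq-starproduct def}, phrased via $\mathfrak{I}$ and the pointwise object $\mathscr{F}_\Omega^g$, to the path-space matrix product; this is exactly where \eqref{eq-overline} carries the load, replacing $\mathfrak{I}(q)$ by an arbitrary admissible $I\in\mathbb{S}(\Omega,\gamma)$ and trading $\mathscr{F}_\Omega^g(q)$ for $F_\Omega^g(\gamma)$. Everything else is bookkeeping, but one must keep verifying that the hypotheses of the cited definitions and of \eqref{eq-overline} remain valid under the specialization $\Omega_1=\Omega_2=\Omega$, which is guaranteed precisely by $\Omega$ being self-stem-preserving; in particular, the closure $f*g\in\mathcal{PS}(\Omega)$ with the expected stem function must be seen to persist under iteration so that $(f*g)*h$ is legitimately formed at each stage.
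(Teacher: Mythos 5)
Your proof is correct: the key identity $F_{\Omega}^{f*g}=F_{\Omega}^{f}*F_{\Omega}^{g}$ does follow, for every $\gamma$ and every $I\in\mathbb{S}(\Omega,\gamma)$, from \eqref{eq-overline} together with \eqref{eq-fcg} (writing $f(q)=a+Ib$ and expanding $(f(q),If(q))(c,d)^{T}=(ac-bd)+I(bc+ad)$, which is $(1,I)$ applied to the matrix $*$-product), and transporting the axioms through the injective $\mathbb{R}$-linear map $f\mapsto F_{\Omega}^{f}$ into the associative unitary algebra $\left\{x\mathbb{I}+y\sigma:x,y\in\mathbb{H}\right\}$ via $\Phi(p)=p_{1}\mathbb{I}+p_{2}\sigma$ is sound, with self-stem-preservation of $\Omega$ guaranteeing exactly the hypotheses needed to form $F_{\Omega}^{f}$ and to iterate the closure $f*g\in\mathcal{PS}(\Omega)$. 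Note that the paper itself states this theorem only as a citation of \cite[Theorem 5.3]{Dou2023003} with no internal proof, but the machinery it recalls immediately beforehand---the pointwise product $F*G(\gamma):=F(\gamma)*G(\gamma)$ on stem functions and Lemma \eqref{eq-overline}---is precisely the route you take, so your reconstruction matches the intended argument.
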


\section{Holomorphic stem functions}\label{sec-holo stem functions}
In this section, we explore the concept of holomorphy for path-dependent stem functions.

We discover that a path-slice stem function 
$F$ derived from a path-slice function 
$f$ is not inherently holomorphic. However, the specific restriction
 $F_{\Omega_1}^f$ of $F$ exhibits holomorphy.

For any  two points $z,w$ in the complex space $\mathbb{C}^n$, we define a standard path connecting $z$ to $w$. This is a  function 
 $\mathcal{L}_z^w: [0, 1] \to \mathbb C^n$ given  by the formula
 $$
\mathcal{L}_z^w(t)=(1-t)z+tw.$$

For a path $\gamma\in\mathscr{P}(\mathbb{C}^n)$ and a positive radius 
$r$, $B_{\mathbb{C}^n}(\gamma(1),r)$ represents a ball in  $\mathbb{C}^n$ centered at the endpoint of the path  $\gamma(1)$ with radius $r$.

  The ball  in  the path space  $\mathscr{P}(\mathbb{C}^n)$ is defined as the set of all paths one can form by taking the standard path
    $\mathcal{L}_{\gamma(1)}^z$ from the end point of $\gamma$ to the point $z$  inside the ball
  $B_{\mathbb{C}^n}(\gamma(1),r)$ in $\mathbb C^n$.
  Essentially, this process  creates  new paths in  $\mathscr{P}(\mathbb{C}^n)$
    by extending 
$\gamma$ to reach any point within a radius 
 $r$ from its endpoint, and each of these new paths starts from the origin of 
 $\gamma$ and ends at some point within this spherical region in $\mathbb C^n$.
This concept is mathematically expressed as
\begin{equation}\label{eq-BP def}
	B_{\mathscr{P}(\mathbb{C}^n)}(\gamma,r):= 
		\left\{\gamma\circ\mathcal{L}_{\gamma(1)}^{z}:z\in B_{\mathbb{C}}(\gamma(1),r)\right\}.
\end{equation}

Let $\Omega\subset\mathbb{H}_s^n$ and  $\gamma\in\mathscr{P}(\mathbb{C}^n)$. We consider  
  a subset  $\mathbb{S}'$ of $\mathbb{S}(\Omega,\gamma)$, 
which could be specific selections or categories within a larger set that are related to the region 
 $\Omega$ and the path $\gamma$.

We need to introduce some concepts which are about finding the largest possible balls  in different spaces, and how these balls fit within certain regions.

\begin{itemize}
	\item $r_{\gamma, \Omega}$: This represents the supremum of radii 
	$r$ for which the ball in the path space centered at 
$\gamma$ with radius 
$r$ is entirely contained within the path space related to 
$\Omega$. Geometrically, this is finding the largest possible radius of a ball  in the path space, centered at 
$\gamma$, that still fits entirely within the region defined by 
$\Omega$.

\item 	$r_{\gamma,\scriptscriptstyle\Omega}$:
   This is the supremum of radii 
	$r$ for which the ball centered at the endpoint of 
 $\gamma$ (in the image space of 
	$\gamma$) with radius 
$r$ is contained within $\Omega_I$
	for all selections 
$I$ in 
$\mathbb S'$.
  Geometrically, it is about finding the largest radius of a ball  in the image space that fits within the 
	regions  $\Omega_I$
	 for every category in 
$\mathbb S'$.
	
 \item  	$r_{\gamma,\scriptscriptstyle\Omega}^2$: This represents the supremum of 
	$r_{\gamma,\scriptscriptstyle\Omega}^{\mathbb{S}''}$
 	over all subsets 
$\mathbb S^{''}$
	of 
$\mathbb S(\Omega, \gamma)$ that have at least two elements. Geometrically, this is finding the largest ball  (in terms of radius) that fits within the $\Omega_I$
	regions for every possible subset of 
$\mathbb S(\Omega, \gamma)$ that contains two or more elements.
	\end{itemize}

These concepts can be expressed as 
\begin{eqnarray*}
	r_{\gamma,\scriptscriptstyle\Omega}&:=& \sup\left\{r\in[0,+\infty):B_{\mathscr{P}(\mathbb{C}^n)}(\gamma,r)\subset\mathscr{P}(\mathbb{C}^n,\Omega)\right\},
\\
	r_{\gamma,\scriptscriptstyle\Omega}^{\mathbb{S}'}&:=& \sup\left\{r\in[0,+\infty):B_I\left(\gamma^I(1),r\right)\subset\Omega_I,\ \forall\ I\in\mathbb{S}'\right\}.
\\
	r_{\gamma,\scriptscriptstyle\Omega}^2 &:=& \sup\left\{r_{\gamma,\scriptscriptstyle\Omega}^{\mathbb{S}''}:\mathbb{S}''\subset\mathbb{S}(\Omega,\gamma)\mbox{ with }|\mathbb{S}''|\geqslant  2\right\}.
\end{eqnarray*}

Under certain conditions, these constants remain positive, which proves to be valuable for practical applications.

\begin{lem} For any  set $\Omega\in\tau_s(\mathbb{H}_s^n)$, it holds that 
	\begin{equation*}
		r_{\gamma,\scriptscriptstyle\Omega}>0
	\end{equation*}
for every path $\gamma$ in the set $\mathscr{P}(\mathbb{C}^n,\Omega).$
\end{lem}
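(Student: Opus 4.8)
The plan is to exhibit a single slice $I$ that simultaneously witnesses membership in $\mathscr{P}(\mathbb{C}^n,\Omega)$ for the base path $\gamma$ and for every small perturbation of it. Since $\gamma\in\mathscr{P}(\mathbb{C}^n,\Omega)$, by definition there is some $I\in\mathbb{S}(\Omega,\gamma)$, so that $\gamma^I\subset\Omega$; in particular $\gamma^I$ is a path lying in $\Omega_I=\Omega\cap\mathbb{C}_I^n$, and its endpoint $\gamma^I(1)=\Psi_i^I(\gamma(1))$ belongs to $\Omega_I$. I would fix this $I$ throughout and produce a radius $\rho>0$ for which $B_{\mathscr{P}(\mathbb{C}^n)}(\gamma,\rho)\subset\mathscr{P}(\mathbb{C}^n,\Omega)$, which immediately gives $r_{\gamma,\scriptscriptstyle\Omega}\geqslant\rho>0$.

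The first step uses that $\Omega\in\tau_s(\mathbb{H}_s^n)$ means exactly that $\Omega_I$ is open in $\mathbb{C}_I^n$. Since $\gamma^I(1)\in\Omega_I$, there is a Euclidean radius $\rho>0$ with $B_I(\gamma^I(1),\rho)\subset\Omega_I$. The second step records the two elementary properties of the identification $\Psi_i^I:\mathbb{C}^n\to\mathbb{C}_I^n$: it is $\mathbb{R}$-linear, hence carries the standard path $\mathcal{L}_{\gamma(1)}^{z}$ to the standard path $\mathcal{L}_{\gamma^I(1)}^{\Psi_i^I(z)}$ and respects concatenation; and it is an isometry, since $|x+yI|=\sqrt{x^2+y^2}=|x+yi|$ coordinatewise, so it maps $B_{\mathbb{C}^n}(\gamma(1),\rho)$ onto $B_I(\gamma^I(1),\rho)$.

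Combining these, take any $z\in B_{\mathbb{C}^n}(\gamma(1),\rho)$ and let $\delta=\gamma\circ\mathcal{L}_{\gamma(1)}^{z}$ be the corresponding element of $B_{\mathscr{P}(\mathbb{C}^n)}(\gamma,\rho)$. Applying $\Psi_i^I$ gives $\delta^I=\gamma^I\circ\mathcal{L}_{\gamma^I(1)}^{\Psi_i^I(z)}$. The first piece $\gamma^I$ lies in $\Omega$ by the choice of $I$. For the second piece, $\Psi_i^I(z)\in B_I(\gamma^I(1),\rho)$, and since a Euclidean ball is convex the entire segment from $\gamma^I(1)$ to $\Psi_i^I(z)$ stays inside $B_I(\gamma^I(1),\rho)\subset\Omega_I$. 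Hence $\delta^I\subset\Omega$, so $\delta\in\mathscr{P}(\mathbb{C}^n,\Omega)$; as $z$ was arbitrary this proves $B_{\mathscr{P}(\mathbb{C}^n)}(\gamma,\rho)\subset\mathscr{P}(\mathbb{C}^n,\Omega)$ and therefore $r_{\gamma,\scriptscriptstyle\Omega}>0$.

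The argument is short, and the only point that needs care, rather than a genuine obstacle, is that the perturbed path is a concatenation, so one must verify that both its initial portion $\gamma^I$ and its appended segment remain in the same slice $\Omega_I$. This is where convexity of the Euclidean ball is essential, since openness of $\Omega_I$ alone controls only the endpoint $\Psi_i^I(z)$ and not the full segment joining it to $\gamma^I(1)$.
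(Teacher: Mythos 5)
Your proof is correct and follows essentially the same route as the paper's: fix a witnessing slice $I\in\mathbb{S}(\Omega,\gamma)$, use slice-openness to find a ball $B_I(\gamma^I(1),r)\subset\Omega_I$, and check that each perturbed path $\gamma\circ\mathcal{L}_{\gamma(1)}^{z}$ maps under $\Psi_i^I$ into $\Omega_I$, giving $r_{\gamma,\scriptscriptstyle\Omega}\geqslant r>0$. The only difference is expository: you spell out the convexity of the Euclidean ball and the isometry/linearity of $\Psi_i^I$, which the paper uses implicitly in asserting $\left(\mathcal{L}_{\gamma(1)}^{z}\right)^I\subseteq B_I(\gamma^I(1),r)$.
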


\begin{proof}
	Assume $\gamma$ is a path in the path space $\mathscr{P}(\mathbb{C}^n, \Omega)$ and let $I$ be an element of $\mathbb{S}(\Omega, \gamma)$. Given the property that $\Omega$ is a slice-open set, we can find a positive radius $r$ such that the ball $B_I(\gamma^I(1), r)$ is entirely contained within the subset $(\Omega)_I$ of $\Omega$.

	We now consider any \( z \in B_{\mathbb{C}}(\gamma(1), r) \). For this choice of \( z \), we derive the following implications 
	\begin{equation*}
		\left(\mathcal{L}_{\gamma(1)}^z\right)^I \subseteq B_I(\gamma^I(1), r) \subseteq \Omega_I.
	\end{equation*}
	Subsequently, we deduce 
	\begin{equation*}
		\left(\gamma \circ \mathcal{L}_{\gamma(1)}^z\right)^I = \gamma^I \circ \left(\mathcal{L}_{\gamma(1)}^z\right)^I \subseteq \Omega_I.
	\end{equation*}
	This leads to the inference that 
	\begin{equation*}
		\gamma \circ \mathcal{L}_{\gamma(1)}^z \in \mathscr{P}(\mathbb{C}^n, \Omega).
	\end{equation*}
Since  $B_{\mathscr{P}(\mathbb{C}^n)}(\gamma, r)$ is a subset of $\mathscr{P}(\mathbb{C}^n, \Omega)$,  it follows   that $r_{\gamma, \scriptscriptstyle\Omega} \geqslant r > 0$.
	 \end{proof}

 \begin{lem}
 	Suppose $\Omega_1$ is a real-path-connected subset of $\tau_s(\mathbb{H}_s^n)$, and $\Omega_2$, also in $\tau_s(\mathbb{H}_s^n)$, is  $\Omega_1$-stem-preserving. Then, for every path $\gamma$ in the path space $\mathscr{P}(\mathbb{C}^n, \Omega_1)$, the following inequality holds:
 	\begin{equation}\label{eq-rgammaOmega2}
 		r_{\gamma,\scriptscriptstyle\Omega_2}^2 > 0.
 	\end{equation}
 \end{lem}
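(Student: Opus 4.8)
The plan is to adapt the argument of the preceding lemma, but to exploit condition (i) of the $\Omega_1$-stem-preserving property so as to secure \emph{two} slices simultaneously rather than just one. The essential difference between $r_{\gamma,\scriptscriptstyle\Omega_2}$ and $r_{\gamma,\scriptscriptstyle\Omega_2}^2$ is that the latter is defined as a supremum over subsets $\mathbb{S}''\subset\mathbb{S}(\Omega_2,\gamma)$ with $|\mathbb{S}''|\geqslant 2$; hence the only genuinely new ingredient needed is a guarantee that $\mathbb{S}(\Omega_2,\gamma)$ contains at least two elements, after which slice-openness does the rest.

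First I would invoke condition (i) of the definition of $\Omega_1$-stem-preserving: since $\gamma\in\mathscr{P}(\mathbb{C}^n,\Omega_1)$ and $\Omega_2$ is $\Omega_1$-stem-preserving, we have $|\mathbb{S}(\Omega_2,\gamma)|\geqslant 2$. I may therefore fix two distinct imaginary units $I,J\in\mathbb{S}(\Omega_2,\gamma)$. By the definition of $\mathbb{S}(\Omega_2,\gamma)$ this means $\gamma^I\subset\Omega_2$ and $\gamma^J\subset\Omega_2$, so in particular the endpoints satisfy $\gamma^I(1)\in(\Omega_2)_I$ and $\gamma^J(1)\in(\Omega_2)_J$.

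Next I would use that $\Omega_2\in\tau_s(\mathbb{H}_s^n)$, whence $(\Omega_2)_I\in\tau(\mathbb{C}_I^n)$ and $(\Omega_2)_J\in\tau(\mathbb{C}_J^n)$ are open. Exactly as in the previous lemma, openness yields radii $r_I,r_J>0$ with
\begin{equation*}
	B_I\!\left(\gamma^I(1),r_I\right)\subset(\Omega_2)_I,\qquad B_J\!\left(\gamma^J(1),r_J\right)\subset(\Omega_2)_J.
\end{equation*}
Setting $r:=\min\{r_I,r_J\}>0$, both balls $B_I(\gamma^I(1),r)$ and $B_J(\gamma^J(1),r)$ lie in their respective slices, so by the definition of $r_{\gamma,\scriptscriptstyle\Omega_2}^{\mathbb{S}''}$ applied to $\mathbb{S}''=\{I,J\}$ we get $r_{\gamma,\scriptscriptstyle\Omega_2}^{\{I,J\}}\geqslant r>0$. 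Since $\{I,J\}\subset\mathbb{S}(\Omega_2,\gamma)$ has two elements, it is an admissible index set in the supremum defining $r_{\gamma,\scriptscriptstyle\Omega_2}^2$, and therefore $r_{\gamma,\scriptscriptstyle\Omega_2}^2\geqslant r_{\gamma,\scriptscriptstyle\Omega_2}^{\{I,J\}}>0$, which is the desired inequality \eqref{eq-rgammaOmega2}.

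I do not anticipate a serious obstacle: the proof is essentially a two-slice version of the previous lemma. The one point that must not be overlooked is that $\gamma$ is only assumed to lie in $\mathscr{P}(\mathbb{C}^n,\Omega_1)$, not in $\mathscr{P}(\mathbb{C}^n,\Omega_2)$, so a priori $\mathbb{S}(\Omega_2,\gamma)$ could even be empty; it is precisely condition (i) of the stem-preserving hypothesis that rules this out and furnishes the required pair of distinct units. Condition (ii) plays no role in this particular statement.
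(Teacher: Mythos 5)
Your proposal is correct and follows essentially the same route as the paper's own proof: use condition (i) of the $\Omega_1$-stem-preserving property to obtain two distinct units $I,J\in\mathbb{S}(\Omega_2,\gamma)$, then use slice-openness of $\Omega_2$ to place a ball of positive radius in each of $(\Omega_2)_I$ and $(\Omega_2)_J$, giving $0<r\leqslant r_{\gamma,\scriptscriptstyle\Omega_2}^{\{I,J\}}\leqslant r_{\gamma,\scriptscriptstyle\Omega_2}^2$. Your only (harmless) elaboration is making the minimum $r=\min\{r_I,r_J\}$ explicit, and your closing remarks — that condition (i) is what rules out $\mathbb{S}(\Omega_2,\gamma)$ being too small and that condition (ii) is not needed — are accurate.
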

 
 \begin{proof}
 	Assume $\gamma$ is a path in the path space $\mathscr{P}(\mathbb{C}^n, \Omega_1)$. Given that $\Omega_2$ in $\tau_s(\mathbb{H}_s^n)$ is  $\Omega_1$-stem-preserving, the set $\mathbb{S}(\Omega_2, \gamma)$ contains at least two distinct elements. Let $I$ and $J$ be two such distinct elements in $\mathbb{S}(\Omega_2, \gamma)$. Since $\Omega_2$ is slice-open, there exists a positive radius $r$ satisfying 
 	\begin{equation*}
 		B_L(\gamma^L(1), r) \subset (\Omega_2)_L,
 	\end{equation*}
 for every   $L \in \{I, J\}.$ 
 	This leads to the conclusion that
 	\begin{equation*}
 		0 < r \leqslant r^{\{I, J\}}_{\gamma, \Omega_2} \leqslant  r^2_{\gamma, \Omega_2}.
 	\end{equation*}
 \end{proof}

 \begin{lem}
 	Consider $\Omega_1$ as a subset of $\tau_s(\mathbb{H}_s^n)$ that is real-path-connected, and $\Omega_2$ also in $\tau_s(\mathbb{H}_s^n)$, which is $\Omega_1$-stem-preserving. Let $\gamma$ be a path in the path space $\mathscr{P}(\mathbb{C}^n, \Omega_1)$ and   $r_1$ be arbitrary from the interval $\left(0, \min\{r_{\gamma,\scriptscriptstyle\Omega_2}^2, r_{\gamma,\scriptscriptstyle\Omega_1}\}\right)$. Then, there exist distinct elements $I, J$ in $\mathbb{S}(\Omega_2, \gamma)$  such that for every path $\beta$ in the ball $B_{\mathscr{P}(\mathbb{C}^n)}(\gamma, r_1)$, which is a subset of $\mathscr{P}(\mathbb{C}^n, \Omega_1)$, the paths  $\beta^I$ and $\beta^J$ are contained within $\Omega_2$, i.e., 
 	\begin{equation}\label{eq-beta in Omega2}
 		\beta^I, \beta^J \subset \Omega_2,\qquad\forall\ \beta\in B_{\mathscr{P}(\mathbb{C}^n)}(\gamma, r_1).
 	\end{equation}
 \end{lem}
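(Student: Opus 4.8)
The plan is to prove the statement in two stages: first select the pair $I,J$ using the definition of $r_{\gamma,\scriptscriptstyle\Omega_2}^2$, and then verify the containment $\beta^I,\beta^J\subset\Omega_2$ by a direct geometric estimate that reuses the concatenation mechanism from the lemma establishing $r_{\gamma,\scriptscriptstyle\Omega}>0$.

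For the selection stage, since $r_1<r_{\gamma,\scriptscriptstyle\Omega_2}^2$ and $r_{\gamma,\scriptscriptstyle\Omega_2}^2$ is by definition the supremum of $r_{\gamma,\scriptscriptstyle\Omega_2}^{\mathbb{S}''}$ over all $\mathbb{S}''\subset\mathbb{S}(\Omega_2,\gamma)$ with $|\mathbb{S}''|\geqslant2$, there exists such a set $\mathbb{S}''$ with $r_{\gamma,\scriptscriptstyle\Omega_2}^{\mathbb{S}''}>r_1$. I would then pick any two distinct $I,J\in\mathbb{S}''$. Since enlarging the index set only strengthens the defining containment condition, the radius can only shrink, so $r_{\gamma,\scriptscriptstyle\Omega_2}^{\{I,J\}}\geqslant r_{\gamma,\scriptscriptstyle\Omega_2}^{\mathbb{S}''}>r_1$. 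By the definition of $r_{\gamma,\scriptscriptstyle\Omega_2}^{\{I,J\}}$ this yields
\begin{equation*}
	B_L\big(\gamma^L(1),r_1\big)\subset(\Omega_2)_L,\qquad L\in\{I,J\}.
\end{equation*}
Moreover $I,J\in\mathbb{S}(\Omega_2,\gamma)$, so $\gamma^I,\gamma^J\subset\Omega_2$.

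For the containment stage, let $\beta\in B_{\mathscr{P}(\mathbb{C}^n)}(\gamma,r_1)$ be arbitrary, so that $\beta=\gamma\circ\mathcal{L}_{\gamma(1)}^z$ for some $z\in B_{\mathbb{C}^n}(\gamma(1),r_1)$, and fix $L\in\{I,J\}$. Because $\Psi_i^L$ acts pointwise, it distributes over concatenation, giving $\beta^L=\gamma^L\circ(\mathcal{L}_{\gamma(1)}^z)^L$, exactly as in the earlier lemma. The key observation is that $\Psi_i^L:\mathbb{C}^n\to\mathbb{C}_L^n$ is a real-linear isometry, since it fixes the real and imaginary coordinates while replacing $i$ by $L$; hence for every $t\in[0,1]$,
\begin{equation*}
	\big\|(\mathcal{L}_{\gamma(1)}^z)^L(t)-\gamma^L(1)\big\|=\big\|\mathcal{L}_{\gamma(1)}^z(t)-\gamma(1)\big\|=t\,\|z-\gamma(1)\|<r_1.
\end{equation*}
Thus $(\mathcal{L}_{\gamma(1)}^z)^L\subset B_L(\gamma^L(1),r_1)\subset(\Omega_2)_L$, and combining this with $\gamma^L\subset\Omega_2$ shows that both pieces of the concatenation lie in $\Omega_2$, whence $\beta^L\subset\Omega_2$. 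Applying this with $L=I$ and $L=J$ gives \eqref{eq-beta in Omega2}. Finally, the parenthetical claim that $B_{\mathscr{P}(\mathbb{C}^n)}(\gamma,r_1)\subset\mathscr{P}(\mathbb{C}^n,\Omega_1)$ follows from $r_1<r_{\gamma,\scriptscriptstyle\Omega_1}$ and the monotonicity of the path-space ball in its radius, as already established.

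I expect the only genuinely delicate point to be the selection stage: the supremum defining $r_{\gamma,\scriptscriptstyle\Omega_2}^2$ need not be attained, so the pair $\{I,J\}$ must be extracted from a witnessing set $\mathbb{S}''$ whose radius strictly exceeds $r_1$, and the monotonicity $r_{\gamma,\scriptscriptstyle\Omega_2}^{\{I,J\}}\geqslant r_{\gamma,\scriptscriptstyle\Omega_2}^{\mathbb{S}''}$ must be invoked to transfer the estimate down to just two slices. The geometric estimate itself is routine once the isometry property of $\Psi_i^L$ is recorded.
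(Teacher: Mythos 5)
Your proposal is correct and follows essentially the same route as the paper: extract a witnessing set $\mathbb{S}''$ (the paper's $\mathbb{S}'$) from the definition of $r_{\gamma,\scriptscriptstyle\Omega_2}^2$, pick distinct $I,J$ in it, and use the decomposition $\beta^L=\gamma^L\circ\big(\mathcal{L}_{\gamma(1)}^{\beta(1)}\big)^L$ together with $B_L\big(\gamma^L(1),r_1\big)\subset(\Omega_2)_L$ to conclude. Your extra care about the supremum not being attained, the monotonicity $r_{\gamma,\scriptscriptstyle\Omega_2}^{\{I,J\}}\geqslant r_{\gamma,\scriptscriptstyle\Omega_2}^{\mathbb{S}''}$, and the isometry of $\Psi_i^L$ merely makes explicit steps the paper leaves implicit.
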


 \begin{proof}
 	Given that $r_1 < r_{\gamma,\scriptscriptstyle\Omega_1}$, we have the inclusion
 	\begin{equation*}
 		B_{\mathscr{P}(\mathbb{C}^n)}(\gamma, r_1) \subseteq \mathscr{P}(\mathbb{C}^n, \Omega_1).
 	\end{equation*}
 	From the condition $r_1 < r_{\gamma,\scriptscriptstyle\Omega_2}^2$, there exists a subset $\mathbb{S}'$ of $\mathbb{S}(\Omega_2, \gamma)$ with $|\mathbb{S}'|\ge 2$ satisfying
 	\begin{equation*}
 		B_L(\gamma^L(1), r_1) \subseteq (\Omega_2)_L
 	\end{equation*}
 for each  $ L \in \mathbb{S}'.$
 	Select distinct elements $I, J \in \mathbb{S}'$ and consider any path $\beta \in B_{\mathscr{P}(\mathbb{C}^n)}(\gamma, r_1)$. By the definition of $\beta$, we have $\beta = \gamma \circ \mathcal{L}_{\gamma(1)}^{\beta(1)}$. Then, for $L \in \{I, J\}$, the following holds:
 	\begin{equation*}
 		\left(\mathcal{L}_{\gamma(1)}^{\beta(1)}\right)^L \subseteq B_L(\gamma^L(1), r) \subseteq (\Omega_2)_L.
 	\end{equation*}
 	Consequently, this implies
 	\begin{equation*}
 		\beta^L = \left(\gamma \circ \mathcal{L}_{\gamma(1)}^{\beta(1)}\right)^L = \gamma^L \circ \left(\mathcal{L}_{\gamma(1)}^{\beta(1)}\right)^L \subseteq (\Omega_2)_L,
 	\end{equation*}
for $ L \in \{I, J\}$. 
 	Thus, the condition \eqref{eq-beta in Omega2} is satisfied.
 \end{proof}

For any  $\gamma\in\mathscr{P}(\mathbb{C}^n)$ and $r>0$, we introduce a map 
\begin{equation}\label{eq-mathscr f def}
	\begin{split}
		\mathscr{L}_{\gamma}:   B_{\mathbb{C}}(\gamma(1),r)  \xlongrightarrow[\hskip0.5cm]{}  B_{\mathscr{P}(\mathbb{C}^n)}(\gamma,r), 
	\end{split}
\end{equation}
defined by
$$ \mathscr{L}_{\gamma}(z)=  \gamma\circ\mathcal{L}_{\gamma(1)}^z. $$

The transformation $\gamma\circ\mathcal{L}_{\gamma(1)}^z$ geometrically represents the extension of the original path 
 to reach the point $z$.    It illustrates how a point-wise approach (selecting a point $z$)
  can lead to a new understanding or interpretation in a path-dependent framework (creating a new path in  $\mathscr{P}(\mathbb{C}^n)$).
  This highlights the interconnectedness and complementary nature of point-wise and path-dependent perspectives in slice  analysis.

\begin{proposition}  	Let $\Omega_1\in\tau_s(\mathbb{H}_s^n)$ be real-path-connected, $\Omega_2\in\tau_s(\mathbb{H}_s^n)$ be $\Omega_1$-stem-preserving, $\gamma\in\mathscr{P}(\mathbb{C}^n,\Omega_1)$ and $f\in\mathcal{SR}(\Omega_2)$.
 Then, the following equation holds:
	\begin{eqnarray}
		F_{\Omega_1}^f\circ\mathscr{L}_{\gamma}(z)=\begin{pmatrix}
			1&I\\ 1& J
		\end{pmatrix}^{-1}\begin{pmatrix}
			f(z^I)\\ f(z^J)
		\end{pmatrix}
	\end{eqnarray}
	for any $z$ in the complex ball $B_{\mathbb{C}^n}\left(\gamma(1),r_1\right)$, where $r_1\in\left(0,\min\left\{r_{\gamma,\scriptscriptstyle\Omega_2}^2,r_{\gamma,\scriptscriptstyle\Omega_1}\right\}\right)$ and  $I,J\in\mathbb{S}(\Omega_2,\gamma)$ with $I\neq J$ and \eqref{eq-beta in Omega2} holds.
\end{proposition}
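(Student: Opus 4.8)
The plan is to reduce the statement to a single application of the pointwise stem-function identity \eqref{eq-fgbp}. Write $\beta := \mathscr{L}_{\gamma}(z) = \gamma\circ\mathcal{L}_{\gamma(1)}^z$, so that the left-hand side is exactly $F_{\Omega_1}^f(\beta)$; the whole argument then consists of checking that $\beta$ is an admissible path for both $\Omega_1$ and $\Omega_2$, identifying its endpoint, and quoting \eqref{eq-fgbp}.

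First I would record the membership facts. Since $z\in B_{\mathbb{C}^n}(\gamma(1),r_1)$, the definition \eqref{eq-BP def} of the path-space ball gives $\beta\in B_{\mathscr{P}(\mathbb{C}^n)}(\gamma,r_1)$. Because $r_1<r_{\gamma,\scriptscriptstyle\Omega_1}$, this ball is contained in $\mathscr{P}(\mathbb{C}^n,\Omega_1)$, so $\beta\in\mathscr{P}(\mathbb{C}^n,\Omega_1)$ and $F_{\Omega_1}^f(\beta)$ is defined. Moreover, $r_1<r_{\gamma,\scriptscriptstyle\Omega_2}^2$ is precisely the hypothesis under which the preceding lemma produces distinct $I,J\in\mathbb{S}(\Omega_2,\gamma)$ satisfying \eqref{eq-beta in Omega2}; applying \eqref{eq-beta in Omega2} to $\beta$ yields $\beta^I,\beta^J\subset\Omega_2$. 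Hence $\beta\in\mathscr{P}(\mathbb{C}^n,\Omega_2)$ and $I,J\in\mathbb{S}(\Omega_2,\beta)$ with $I\neq J$.

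Next I would compute the endpoint. As $\mathcal{L}_{\gamma(1)}^z(1)=z$, concatenation gives $\beta(1)=z$, whence
\begin{equation*}
	\beta^I(1)=\Psi_i^I(\beta(1))=\Psi_i^I(z)=z^I,\qquad \beta^J(1)=z^J.
\end{equation*}
By \cite[Corollary~5.9]{Dou2023002}, the function $f\in\mathcal{SR}(\Omega_2)$ is path-slice, so I may fix $G\in\mathcal{PSS}(f)$; by the defining formula \eqref{eq-F def} for $F_{\Omega_1}^f$ we have $F_{\Omega_1}^f(\beta)=G(\beta)$. Applying \eqref{eq-fgbp} to the path-slice function $f$ on $\Omega_2$, the stem function $G$, the path $\beta\in\mathscr{P}(\mathbb{C}^n,\Omega_2)$, and the distinct indices $I,J\in\mathbb{S}(\Omega_2,\beta)$ gives
\begin{equation*}
	F_{\Omega_1}^f(\beta)=G(\beta)=\begin{pmatrix} 1 & I \\ 1 & J \end{pmatrix}^{-1}\begin{pmatrix} f\circ\beta^I(1) \\ f\circ\beta^J(1) \end{pmatrix}=\begin{pmatrix} 1 & I \\ 1 & J \end{pmatrix}^{-1}\begin{pmatrix} f(z^I) \\ f(z^J) \end{pmatrix},
\end{equation*}
which is the claimed identity, since $F_{\Omega_1}^f(\beta)=F_{\Omega_1}^f\circ\mathscr{L}_{\gamma}(z)$.

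The argument is essentially bookkeeping, so the only delicate point is organizational: one must use the two radius constraints on $r_1$ for their respective purposes—$r_1<r_{\gamma,\scriptscriptstyle\Omega_1}$ to keep $\beta$ inside $\mathscr{P}(\mathbb{C}^n,\Omega_1)$ (so that $F_{\Omega_1}^f(\beta)$ even exists and is independent of $G$), and $r_1<r_{\gamma,\scriptscriptstyle\Omega_2}^2$ to secure the two slices $I,J$ with $\beta^I,\beta^J\subset\Omega_2$ needed to invoke \eqref{eq-fgbp}. I would also emphasize that slice regularity of $f$ is \emph{not} used in establishing the identity itself—only its path-slice property is; the holomorphy of $f$ enters afterward, through the fact that $z\mapsto f(z^I)$ and $z\mapsto f(z^J)$ are holomorphic, which is what makes $F_{\Omega_1}^f\circ\mathscr{L}_{\gamma}$ holomorphic and is the evident purpose of this proposition.
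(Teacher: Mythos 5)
Your proof is correct and follows essentially the same route as the paper's: set $\beta=\mathscr{L}_{\gamma}(z)$, note $\beta(1)=z$ and that $I,J\in\mathbb{S}(\Omega_2,\beta)$ by \eqref{eq-beta in Omega2}, then conclude via \eqref{eq-F def} and \eqref{eq-fgbp}. You merely make explicit some bookkeeping the paper leaves implicit (the separate roles of $r_1<r_{\gamma,\scriptscriptstyle\Omega_1}$ and $r_1<r_{\gamma,\scriptscriptstyle\Omega_2}^2$, and that $f$ is path-slice because it is slice regular), which is a harmless refinement of the same argument.
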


\begin{proof}
	Suppose we take any $z$ from $B_{\mathbb{C}}\left(\gamma(1),r_1\right)$. Define $\beta$ as $$\mathscr{L}_{\gamma}(z) = \gamma \circ \mathcal{L}_{\gamma(1)}^z$$ and let $F\in \mathcal{PSS}(f)$. It follows that:
	\begin{equation*}
		\beta(1) =  \left[\gamma \circ \mathcal{L}_{\gamma(1)}^z \right] (1)= z.
	\end{equation*}
	Given \eqref{eq-beta in Omega2}, $I$ and $J$ are elements of $\mathbb{S}(\Omega_2,\beta)$. Referring to \eqref{eq-F def} and \eqref{eq-fgbp}, we can deduce 
	\	\begin{equation*}
	\begin{split}
		F_{\Omega_1}^f\circ\mathscr{L}_{\gamma}(z)=F_{\Omega_1}^f(\beta)=F(\beta)
		=&\begin{pmatrix}1&I\\1&J\end{pmatrix}^{-1}
		\begin{pmatrix}f\circ\beta^I(1)\\f\circ\beta^J(1)\end{pmatrix}
		\\=&\begin{pmatrix}1&I\\1&J\end{pmatrix}^{-1}
		\begin{pmatrix}f\left(z^I\right)\\f\left(z^J\right)\end{pmatrix}.
	\end{split}
\end{equation*}
This completes the proof. \end{proof}

We are now in a position to define the notion of a function being holomorphic over a space of paths.

\begin{defn}
	Let $\Omega\subset\mathbb{H}_s^n$ and $\gamma\in\mathscr{P}(\mathbb{C}^n)$. A function $F:\mathscr{P}(\mathbb{C}^n,\Omega)\rightarrow\mathbb{H}^{2\times 1}$ is called \textit{\textbf{holomorphic}} at $\gamma$, if there is $r>0$ such that $B_{\mathscr{P}(\mathbb{C}^n)}(\gamma,r)\subset\mathscr{P}(\mathbb{C}^n,\Omega)$ and
	\begin{equation}\label{eq-holo def}
		\frac{1}{2}\left(\frac{\partial}{\partial x_\ell}+\sigma\frac{\partial}{\partial y_\ell}\right)\left(F\circ\mathscr{L}_{\gamma}\right)(x+yi)=0,
	\end{equation}
	for each $x+yi\in B_{\mathbb{C}}(\gamma(1),r)$ and $\ell\in\{1,...,n\}$.

	Moreover, we call $F$ is holomorphic in $U\subset \mathscr{P}(\mathbb{C}^n,\Omega)$ if $F$ is holomorphic at each $\gamma\in U$. 
\end{defn}

\begin{prop}\label{pr-FOmega1f holo}
	Let $\Omega_1\in\tau_s(\mathbb{H}_s^n)$ be real-path-connected, $\Omega_2\in\tau_s(\mathbb{H}_s^n)$ be $\Omega_1$-stem-preserving, and $f\in\mathcal{SR}(\Omega_2)$. Then $F_{\Omega_1}^f$ is holomorphic in $\mathscr{P}(\mathbb{C}^n,\Omega).$
\end{prop}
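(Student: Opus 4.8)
The plan is to reduce the holomorphy of $F_{\Omega_1}^f$ at an arbitrary path $\gamma$ to the ordinary holomorphy of $f$ on the complex slices $\Omega_2 \cap \mathbb{C}_I^n$ and $\Omega_2 \cap \mathbb{C}_J^n$, using the explicit formula for $F_{\Omega_1}^f \circ \mathscr{L}_\gamma$ furnished by the preceding Proposition. Fix $\gamma \in \mathscr{P}(\mathbb{C}^n, \Omega_1)$. By the two preceding lemmas, $r_{\gamma,\scriptscriptstyle\Omega_1} > 0$ and $r^2_{\gamma,\scriptscriptstyle\Omega_2} > 0$, so I may choose $r_1 \in \left(0, \min\{r^2_{\gamma,\scriptscriptstyle\Omega_2}, r_{\gamma,\scriptscriptstyle\Omega_1}\}\right)$ and, via the lemma producing \eqref{eq-beta in Omega2}, distinct $I, J \in \mathbb{S}(\Omega_2, \gamma)$ for which $\beta^I, \beta^J \subset \Omega_2$ for all $\beta \in B_{\mathscr{P}(\mathbb{C}^n)}(\gamma, r_1)$. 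Since $r_1 < r_{\gamma,\scriptscriptstyle\Omega_1}$ gives $B_{\mathscr{P}(\mathbb{C}^n)}(\gamma, r_1) \subset \mathscr{P}(\mathbb{C}^n, \Omega_1)$, the first requirement in the definition of holomorphy at $\gamma$ is met with $r = r_1$. It remains to verify the differential equation \eqref{eq-holo def}.

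For this, I would invoke the Proposition to write, for every $z = x + yi \in B_{\mathbb{C}}(\gamma(1), r_1)$,
\begin{equation*}
	\left(F_{\Omega_1}^f \circ \mathscr{L}_\gamma\right)(z) = \begin{pmatrix} 1 & I \\ 1 & J \end{pmatrix}^{-1} \begin{pmatrix} f(z^I) \\ f(z^J) \end{pmatrix}.
\end{equation*}
The key point is that the constant matrix $M := \left(\begin{smallmatrix} 1 & I \\ 1 & J \end{smallmatrix}\right)^{-1}$ does not depend on $z$, so applying the operator $\tfrac{1}{2}\bigl(\tfrac{\partial}{\partial x_\ell} + \sigma \tfrac{\partial}{\partial y_\ell}\bigr)$ passes it through; the content reduces to understanding how $\sigma$ interacts with $M$ and with the Cauchy–Riemann operators satisfied by $f$ on each slice. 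Here $z^I = \Psi_i^I(z)$ means the variable in the $I$-slice is $x + yI$, so the slice regularity of $f \in \mathcal{SR}(\Omega_2)$ gives, componentwise,
\begin{equation*}
	\frac{\partial}{\partial x_\ell} f(z^I) = -I\,\frac{\partial}{\partial y_\ell} f(z^I), \qquad \frac{\partial}{\partial x_\ell} f(z^J) = -J\,\frac{\partial}{\partial y_\ell} f(z^J),
\end{equation*}
with analogous identities for $J$. The plan is then a direct computation: substitute these into $\bigl(\tfrac{\partial}{\partial x_\ell} + \sigma \tfrac{\partial}{\partial y_\ell}\bigr)(F_{\Omega_1}^f \circ \mathscr{L}_\gamma)$ and check that the result vanishes.

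The step I expect to be the main obstacle is the algebra linking the symplectic-type matrix $\sigma$ on the $\mathbb{H}^{2\times 1}$ side with left multiplication by $I$ and $J$ on the scalar side. The identity one needs is essentially that $\sigma \cdot M = M \cdot \operatorname{diag}(-I, -J)$ in the appropriate sense, so that the $\sigma \tfrac{\partial}{\partial y_\ell}$ term reproduces exactly the $-I, -J$ factors coming from slice regularity and cancels the $\tfrac{\partial}{\partial x_\ell}$ term. Concretely, writing $M = \tfrac{1}{J-I}\left(\begin{smallmatrix} J & -I \\ -1 & 1 \end{smallmatrix}\right)$ (being careful that entries are quaternions and the scalar $(J-I)^{-1}$ is applied consistently), I would verify the compatibility relation between $\sigma$ and the columns of $M$ by direct matrix multiplication, which is the heart of why the $*$-product construction via the matrix $(1, \sigma)$ encodes holomorphy correctly. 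Once this linear-algebraic identity is confirmed, the vanishing of \eqref{eq-holo def} is immediate, and since $\gamma$ was arbitrary, $F_{\Omega_1}^f$ is holomorphic throughout $\mathscr{P}(\mathbb{C}^n, \Omega)$, completing the proof.
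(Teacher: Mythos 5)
Your proposal follows the paper's proof step for step in structure: fix $\gamma$, choose $r_1\in\left(0,\min\{r_{\gamma,\scriptscriptstyle\Omega_2}^2,r_{\gamma,\scriptscriptstyle\Omega_1}\}\right)$, take the distinct $I,J\in\mathbb{S}(\Omega_2,\gamma)$ furnished by the lemma giving \eqref{eq-beta in Omega2}, insert the formula $F_{\Omega_1}^f\circ\mathscr{L}_{\gamma}(z)=\left(\begin{smallmatrix}1&I\\1&J\end{smallmatrix}\right)^{-1}\left(\begin{smallmatrix}f(z^I)\\f(z^J)\end{smallmatrix}\right)$ from the preceding proposition, pass the constant matrix through the operator, and let the slice-wise Cauchy--Riemann equations annihilate the result. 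That is exactly the paper's argument; the only substantive issue is the sign in your pivotal identity.

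Concretely, writing $A=\left(\begin{smallmatrix}1&I\\1&J\end{smallmatrix}\right)$, the identity you need --- and the one the paper records --- is
\begin{equation*}
	\sigma A^{-1}=A^{-1}\begin{pmatrix}I&0\\0&J\end{pmatrix},
\end{equation*}
with \emph{positive} $I,J$; it is verified instantly from $A\sigma=\left(\begin{smallmatrix}I&-1\\J&-1\end{smallmatrix}\right)=\diag(I,J)\,A$. With it, $\bigl(\partial_{x_\ell}+\sigma\partial_{y_\ell}\bigr)\bigl(A^{-1}v\bigr)=A^{-1}\bigl(\partial_{x_\ell}+\diag(I,J)\,\partial_{y_\ell}\bigr)v$, and each component vanishes by the slice regularity of $f$. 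Your stated version $\sigma M=M\diag(-I,-J)$ would instead give $A^{-1}\bigl(\partial_{x_\ell}-\diag(I,J)\,\partial_{y_\ell}\bigr)v$, whose components, after substituting your (correct) relations $\partial_{x_\ell}f(z^I)=-I\,\partial_{y_\ell}f(z^I)$ and $\partial_{x_\ell}f(z^J)=-J\,\partial_{y_\ell}f(z^J)$, equal $-2I\,\partial_{y_\ell}f(z^I)$ and $-2J\,\partial_{y_\ell}f(z^J)$: the two terms reinforce rather than cancel. (Your prose asks the $\sigma\partial_{y_\ell}$ term to \emph{reproduce} the $-I,-J$ factors \emph{and} to cancel the $\partial_{x_\ell}$ term; it cannot do both --- cancellation requires it to produce $+I,+J$.) A second, smaller caution: in the explicit inverse the quaternionic scalar must sit on the correct side, namely $A^{-1}=\left(\begin{smallmatrix}J&-I\\-1&1\end{smallmatrix}\right)(J-I)^{-1}$ with $(J-I)^{-1}$ multiplying each entry on the right; a left factor $\tfrac{1}{J-I}$ is not equivalent since $(J-I)^{-1}$ does not commute with $I$ and $J$. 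Since you committed to checking the identity by direct matrix multiplication, that check would have exposed the sign; with the corrected identity your argument coincides with the paper's proof and is complete.
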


\begin{proof}
	For each path $\gamma$ in $\mathscr{P}(\mathbb{C}^n,\Omega_1)$, let us select $r_1$ from the interval $ (0,\min(r_{\gamma,\scriptscriptstyle\Omega_2}^2,r_{\gamma,\scriptscriptstyle\Omega_1})) $. We choose two distinct elements $I, J$ from $\mathbb{S}(\Omega_2,\gamma)$ such that equation \eqref{eq-beta in Omega2} is satisfied.
	
	Observe the following identity:
	\begin{equation*}
		\sigma\begin{pmatrix} 1 & I\\ 1 & J \end{pmatrix}^{-1}
		=\begin{pmatrix} 1 & I\\ 1 & J \end{pmatrix}^{-1}
		\begin{pmatrix} I & \\ & J \end{pmatrix}.
	\end{equation*}
	
	Upon direct computation, we arrive at
	\begin{equation*}
		\begin{aligned}
			&\frac{1}{2}\left(\frac{\partial}{\partial x_\ell}+\sigma\frac{\partial}{\partial y_\ell}\right)\left(F_{\Omega_1}^f\circ\mathscr{L}{\gamma}\right)(z)
			\\=&\frac{1}{2}\left(\frac{\partial}{\partial x\ell}+\sigma\frac{\partial}{\partial y_\ell}\right)
			\begin{pmatrix} 1&I\\1& J \end{pmatrix}^{-1}
			\begin{pmatrix} f(z^I)\\ f(z^J) \end{pmatrix}
			\\=&\begin{pmatrix} 1&I\\1& J \end{pmatrix}^{-1}
			\left[\frac{1}{2}\left(\frac{\partial}{\partial x_\ell}+
			\begin{pmatrix} I\\ & J \end{pmatrix}\frac{\partial}{\partial y_\ell}\right)\right]
			\begin{pmatrix} f(z^I)\\ f(z^J) \end{pmatrix}
			\\=&\begin{pmatrix} 1&I\\1& J \end{pmatrix}^{-1}
			\begin{pmatrix}\frac{1}{2}\left(\frac{\partial}{\partial x_\ell}+I\frac{\partial}{\partial y_\ell}\right) f(z^I)\\  \frac{1}{2}\left(\frac{\partial}{\partial x_\ell}+J\frac{\partial}{\partial y_\ell}\right) f(z^J) \end{pmatrix}
			\\=&\begin{pmatrix} 1&I\\ 1& J \end{pmatrix}^{-1} \begin{pmatrix}0\\ 0\end{pmatrix}=0,
		\end{aligned}
	\end{equation*}
	for every $z=x+yi$ in $B_{\mathbb{C}}(\gamma(1),r_1)$ and for all $\ell\in{1,...,n}$. By the definition in \eqref{eq-holo def}, it follows that $F_{\Omega_1}^f$ is holomorphic in $\mathscr{P}(\mathbb{C}^n,\Omega)$.
\end{proof}

\section{Star-product of slice regular functions}\label{sec-star product}

In this section, we show that the $*$-product of slice regular functions is still slice regular. It implies that the class of slice regular functions is an associative unitary real algebra.
For this purpose, we will require the assistance of several results.

Let $\Omega\subset\mathbb{H}_s^n$, $\gamma\in\mathscr{P}(\mathbb{C}^n,\Omega)$ and $I\in\mathbb{S}(\Omega,\gamma)$. Denote
\begin{equation*}
	r_{\gamma,\Omega}^{I}:=\sup\left\{r\in[0,+\infty):B_I\left(\gamma^I(1),r\right)\subset\Omega_I\right\}.
\end{equation*}
Geometrically, it represents the largest radius $r$ for which a ball $B_I(\gamma^I(1), r)$, centered at the point $\gamma^I(1)$ in the slice $I$ of $\Omega$, remains entirely inside the subset $\Omega_I$ of $\Omega$ associated with the slice $I$. In simpler terms, it is the largest ball  you can draw around the endpoint of the path $\gamma^I$   without leaving the space $\Omega_I$.

\begin{lem}
	Let $\Omega\in\tau_s(\mathbb{H}_s^n)$, $\gamma\in\mathscr{P}(\mathbb{C}^n,\Omega)$ and $I\in\mathbb{S}(\Omega,\gamma)$. It holds that  $$r_{\gamma,\Omega}^{I}>0.$$ 
	Furthermore, for any point $z$ within the complex ball $B_{\mathbb{C}}(\gamma(1),r_{\gamma,\Omega}^{I})$,
 we have 
	\begin{equation}\label{eq-Lgammaz}
		\mathscr{L}_{\gamma}(z)\in\mathscr{P}(\mathbb{C}^n,\Omega).
	\end{equation}
\end{lem}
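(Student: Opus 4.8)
The plan is to prove the two assertions separately: the positivity $r_{\gamma,\Omega}^{I}>0$ is an immediate consequence of slice-openness, while the inclusion \eqref{eq-Lgammaz} rests on the fact that $\Psi_i^I$ preserves the Euclidean norm, which lets a radius condition imposed in the single slice $\mathbb{C}_I^n$ be transferred from ordinary distances in $\mathbb{C}^n$.

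For positivity, since $I\in\mathbb{S}(\Omega,\gamma)$ we have $\gamma^I\subset\Omega$, so in particular the endpoint $\gamma^I(1)$ lies in $\Omega_I=\Omega\cap\mathbb{C}_I^n$. Because $\Omega\in\tau_s(\mathbb{H}_s^n)$, the slice $\Omega_I$ is open in $\mathbb{C}_I^n$, so there is some $r>0$ with $B_I(\gamma^I(1),r)\subset\Omega_I$. This $r$ lies in the defining set of the supremum, whence $r_{\gamma,\Omega}^{I}\geqslant r>0$.

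For the inclusion, I would fix $z\in B_{\mathbb{C}}(\gamma(1),r_{\gamma,\Omega}^{I})$ and set $\rho_0:=|z-\gamma(1)|<r_{\gamma,\Omega}^{I}$. Since a smaller radius yields a smaller ball, the defining set of the supremum is downward closed, so for any $\rho$ with $\rho_0<\rho<r_{\gamma,\Omega}^{I}$ one has $B_I(\gamma^I(1),\rho)\subset\Omega_I$. The key step is to control the segment $(\mathcal{L}_{\gamma(1)}^z)^I$: writing $\mathcal{L}_{\gamma(1)}^z(t)-\gamma(1)=t(z-\gamma(1))$ and using that $\Psi_i^I$ preserves the norm (so $|x+yI|=|x+yi|$ for $x,y\in\mathbb{R}^n$), I would estimate
\[
\left|\left(\mathcal{L}_{\gamma(1)}^z\right)^I(t)-\gamma^I(1)\right|=\left|\mathcal{L}_{\gamma(1)}^z(t)-\gamma(1)\right|=t\,\rho_0\leqslant\rho_0<\rho
\]
for every $t\in[0,1]$, whence $(\mathcal{L}_{\gamma(1)}^z)^I\subseteq B_I(\gamma^I(1),\rho)\subset\Omega_I$.

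Finally I would assemble the two pieces. Since $\Psi_i^I$ carries a concatenation to the concatenation of the images,
\[
\left(\gamma\circ\mathcal{L}_{\gamma(1)}^z\right)^I=\gamma^I\circ\left(\mathcal{L}_{\gamma(1)}^z\right)^I,
\]
and both $\gamma^I\subset\Omega$ (as $I\in\mathbb{S}(\Omega,\gamma)$) and $(\mathcal{L}_{\gamma(1)}^z)^I\subset\Omega_I\subset\Omega$ hold; hence the whole $I$-slice of $\mathscr{L}_{\gamma}(z)=\gamma\circ\mathcal{L}_{\gamma(1)}^z$ lies in $\Omega$. As $\mathscr{L}_{\gamma}(z)$ is continuous and starts at $\gamma(0)\in\mathbb{R}^n$, it belongs to $\mathscr{P}(\mathbb{C}^n)$, and the inclusion just shown gives $I\in\mathbb{S}(\Omega,\mathscr{L}_{\gamma}(z))$, i.e. $\mathscr{L}_{\gamma}(z)\in\mathscr{P}(\mathbb{C}^n,\Omega)$, which is \eqref{eq-Lgammaz}. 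The only point requiring care — the step I would single out — is the norm-preservation of $\Psi_i^I$, since this is precisely what allows the slice-$\mathbb{C}_I^n$ ball condition to be read off from Euclidean distances in $\mathbb{C}^n$; the remainder is bookkeeping with the supremum and with concatenation.
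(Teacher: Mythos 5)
Your proposal is correct and follows essentially the same route as the paper: positivity from slice-openness of $\Omega_I$, then containment of the segment $(\mathcal{L}_{\gamma(1)}^z)^I$ in a ball inside $\Omega_I$, and concatenation $(\gamma\circ\mathcal{L}_{\gamma(1)}^z)^I=\gamma^I\circ(\mathcal{L}_{\gamma(1)}^z)^I$ to conclude \eqref{eq-Lgammaz}. The only cosmetic difference is that the paper absorbs your intermediate-radius step by writing $B_I\bigl(\gamma^I(1),r_{\gamma,\Omega}^{I}\bigr)=\bigcup_{r'<r_{\gamma,\Omega}^{I}}B_I\bigl(\gamma^I(1),r'\bigr)\subset\Omega_I$, whereas you pick $\rho$ with $\rho_0<\rho<r_{\gamma,\Omega}^{I}$ — these are equivalent, and your explicit note on the norm preservation of $\Psi_i^I$ is a point the paper leaves implicit.
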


\begin{proof}
	Given that the point $\gamma^I(1)$ is contained in $\Omega$ and $\Omega_I$ is a member of the topology $\tau(\mathbb{C}_I^n)$, there exists a positive radius $r$ satisfying
	\begin{equation*}
		B_I\left(\gamma^I(1),r\right) \subset  \Omega_I.
	\end{equation*}
	This observation ensures that $r_{\gamma,\Omega}^{I}$ is greater than zero.

From its definition, it follows that
\begin{equation*}
	B_I\left(\gamma^I(1),r'\right) \subseteq \Omega_I,
\end{equation*}
 for all  $r' \in \left(0, r_{\gamma,\Omega}^{I}\right)$.
Consequently, we can assert that
\begin{equation}\label{eq-BI in OmegaI}
	B_I\left(\gamma^I(1), r_{\gamma,\Omega}^{I}\right) = \bigcup_{r' \in \left(0, r_{\gamma,\Omega}^{I}\right)} B_I\left(\gamma^I(1), r'\right) \subset  \Omega_I.
\end{equation}
For any point $z$ in the complex ball $B_{\mathbb{C}}\left(\gamma(1), r_{\gamma,\Omega}^{I}\right)$ and considering equation \eqref{eq-BI in OmegaI}, we have that $\left(\mathcal{L}_{\gamma(1)}^z\right)^I$ is a subset of $\Omega_I$. This implies
\begin{equation*}
	\left(\mathscr{L}_{\gamma}(z)\right)^I = \left(\gamma \circ \mathcal{L}_{\gamma(1)}^z\right)^I = \gamma^I \circ \left(\mathcal{L}_{\gamma(1)}^z\right)^I \subset  \Omega_I.
\end{equation*}
Hence, equation \eqref{eq-Lgammaz} is satisfied, completing the proof.
\end{proof}

\begin{prop}
	Let $\Omega_1\in\tau_s(\mathbb{H}_s^n)$ be real-path-connected, $\Omega_2\in\tau_s(\mathbb{H}_s^n)$ be $\Omega_1$-stem-preserving, $f\in\mathcal{PS}(\Omega_1)$ and $g\in\mathcal{PS}(\Omega_2)$. 
Then, the following relationship holds:  \begin{equation}\label{eq-f*g to z}
		f*g\left(z^I\right)=\left(f\left(z^I\right),If\left(z^I\right)\right) \left(F_{\Omega_1}^g\circ\mathscr{L}_\gamma\right)(z),
	\end{equation}
	for every path  $\gamma\in\mathscr{P}(\mathbb{C}^n,\Omega_1)$, each element $I\in\mathbb{S}(\Omega_1,\gamma)$, any radius $r\in\left(0,r_{\gamma,\Omega_1}^{I}\right)$ and for all points $z\in B_\mathbb{C}\left(\gamma(1),r\right)$.
\end{prop}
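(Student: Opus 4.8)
The plan is to reduce the statement to a single application of the stem-function identity \eqref{eq-overline} (Lemma 4.2), once I verify that extending $\gamma$ out to $z$ produces a legitimate path in $\mathscr{P}(\mathbb{C}^n,\Omega_1)$ that still admits the unit $I$. First I would set $\beta:=\mathscr{L}_\gamma(z)=\gamma\circ\mathcal{L}_{\gamma(1)}^z$, so that $\beta(1)=z$ and hence $\beta^I(1)=z^I$. Since $r<r_{\gamma,\Omega_1}^{I}$, the point $z$ lies in $B_{\mathbb{C}}\left(\gamma(1),r_{\gamma,\Omega_1}^{I}\right)$, so the preceding lemma (via \eqref{eq-Lgammaz}, together with the inclusion $(\mathscr{L}_\gamma(z))^I\subset(\Omega_1)_I$ established in its proof) yields $\beta\in\mathscr{P}(\mathbb{C}^n,\Omega_1)$ as well as $I\in\mathbb{S}(\Omega_1,\beta)$. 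This is exactly the hypothesis required to invoke Lemma 4.2 along the path $\beta$.

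Next I would apply \eqref{eq-overline} to the path $\beta$, the unit $I$, the constant $c:=f(z^I)$, and the endpoint $q:=\beta^I(1)=z^I$. The outer equality of \eqref{eq-overline} then reads
\begin{equation*}
	\left(f(z^I),\,If(z^I)\right)F_{\Omega_1}^{g}(\beta)=\left(f(z^I),\,\mathfrak{I}(z^I)f(z^I)\right)\mathscr{F}_{\Omega_1}^{g}(z^I).
\end{equation*}
By the definition \eqref{eq-starproduct def} of the $*$-product, evaluated at the point $z^I\in\Omega_1$, the right-hand side is precisely $f*g(z^I)$.

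Finally, since $\beta=\mathscr{L}_\gamma(z)$, the factor $F_{\Omega_1}^{g}(\beta)$ equals $\left(F_{\Omega_1}^{g}\circ\mathscr{L}_\gamma\right)(z)$, and the displayed identity becomes exactly \eqref{eq-f*g to z}. I expect there to be essentially no analytic content in this argument: the only genuine point to confirm is the membership $I\in\mathbb{S}(\Omega_1,\beta)$ that legitimizes the use of Lemma 4.2, and this is precisely where the radius restriction $r<r_{\gamma,\Omega_1}^{I}$ is consumed. Everything after that is substitution into the definition of the $*$-product and the identity \eqref{eq-overline}, so the main (mild) obstacle is bookkeeping the passage from the point $z^I$ to the extended path $\beta$ and back.
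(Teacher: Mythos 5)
Your proposal is correct and follows essentially the same route as the paper's proof: extend $\gamma$ to $\beta=\mathscr{L}_\gamma(z)$, justify $\beta\in\mathscr{P}(\mathbb{C}^n,\Omega_1)$ with $I\in\mathbb{S}(\Omega_1,\beta)$ via \eqref{eq-Lgammaz}, and then combine the definition \eqref{eq-starproduct def} with the identity \eqref{eq-overline} applied to $c=f(z^I)$ and $q=z^I$. Your explicit verification that $I\in\mathbb{S}(\Omega_1,\beta)$ is slightly more careful bookkeeping than the paper, which simply cites \eqref{eq-Lgammaz}, but the argument is the same.
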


\begin{proof}
	Consider $\gamma$ as a path in $\mathscr{P}(\mathbb{C}^n,\Omega_1)$, $I$ as an element of $\mathbb{S}(\Omega_1,\gamma)$, $r$ within the range $\left(0,r_{\gamma,\Omega_1}^{I}\right)$, and let $z$ be a point in the complex ball $B_{\mathbb{C}}\left(\gamma(1),r\right)$. From equation \eqref{eq-Lgammaz}, we have $\mathscr{L}_{\gamma}(z)$ residing in $\mathscr{P}(\mathbb{C}^n,\Omega_1)$. Referring to equations \eqref{eq-starproduct def} and \eqref{eq-overline}, the computation proceeds as follows:
	\begin{equation*}
	\begin{split}
		f*g(z^I)
		=&\left(f(z^I),\mathfrak{I}(z^I)f(z^I)\right) \mathscr{F}_{\Omega_1}^{g}(z^I)
		\\=&\left(f(z^I),If(z^I)\right) F_{\Omega_1}^g\left(\gamma\circ\mathcal{L}_{\gamma(1)}^z\right)
		\\=&\left(f(z^I),If(z^I)\right) F_{\Omega_1}^g\left(\mathscr{L}_{\gamma}(z)\right)
		\\=& \left(f\left(z^I\right),If\left(z^I\right)\right) \left(F_{\Omega_1}^g\circ\mathscr{L}_\gamma\right)(z).
	\end{split}
\end{equation*}
	This completes the proof.
\end{proof}

Let $c\in\mathbb{H}$ and $I\in\mathbb{S}$. Then
\begin{equation}\label{eq-icic}
	I(c,Ic)=(Ic,-c)=(c,Ic)\begin{pmatrix} & -1\\ 1 \end{pmatrix}=(c,Ic)\sigma.
\end{equation}

\begin{thm}\label{thm-sr}
	Let $\Omega_1\in\tau_s(\mathbb{H}_s^n)$ be real-path-connected, $\Omega_2\in\tau_s(\mathbb{H}_s^n)$ be $\Omega_1$-stem-preserving, $f\in\mathcal{SR}(\Omega_1)$ and $g\in\mathcal{SR}(\Omega_2)$. Then $f*g\in \mathcal{SR}(\Omega_1)$.
\end{thm}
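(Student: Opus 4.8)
The plan is to show that $f*g$ is weakly slice regular by verifying that its restriction to each complex slice $\mathbb{C}_I^n$ satisfies the Cauchy–Riemann equations. The natural strategy is to combine the local formula \eqref{eq-f*g to z} for $f*g$ on a slice with the holomorphy of the restricted stem function $F_{\Omega_1}^g$ established in Proposition \ref{pr-FOmega1f holo}. First I would fix an arbitrary $I \in \mathbb{S}$ and a point $q \in (\Omega_1)_I$; by real-path-connectedness there is a path $\gamma \in \mathscr{P}(\mathbb{C}^n,\Omega_1)$ with $I \in \mathbb{S}(\Omega_1,\gamma)$ and $\gamma^I(1)=q$. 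Choosing $r \in (0, r_{\gamma,\Omega_1}^I)$, formula \eqref{eq-f*g to z} expresses $(f*g)(z^I)$ on the ball $B_{\mathbb{C}}(\gamma(1),r)$ as the product $\bigl(f(z^I),\,I f(z^I)\bigr)\,\bigl(F_{\Omega_1}^g\circ\mathscr{L}_\gamma\bigr)(z)$, reducing the problem to differentiating this product in $z=x+yi$.

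The key computational step is to apply the Cauchy–Riemann operator $\tfrac12(\partial_{x_\ell}+I\partial_{y_\ell})$ to this product and use the Leibniz rule. The left factor $\bigl(f(z^I),\,If(z^I)\bigr)$ is annihilated by this operator because $f \in \mathcal{SR}(\Omega_1)$ gives $\tfrac12(\partial_{x_\ell}+I\partial_{y_\ell})f(z^I)=0$. The right factor requires translating the holomorphy of $F_{\Omega_1}^g$, which is phrased using the matrix operator $\tfrac12(\partial_{x_\ell}+\sigma\partial_{y_\ell})$ from \eqref{eq-holo def}, into something involving the scalar $I$. Here I would invoke the identity \eqref{eq-icic}, namely $I(c,Ic)=(c,Ic)\sigma$, which is exactly the bridge that converts left-multiplication by $I$ on a row vector of the form $(c,Ic)$ into right-multiplication by $\sigma$ on the matrix side. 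Thus when the operator $\tfrac12(\partial_{x_\ell}+I\partial_{y_\ell})$ acts with the row vector $(f(z^I),If(z^I))$ held fixed, the $I$ in front of $\partial_{y_\ell}$ can be absorbed through \eqref{eq-icic} to produce $(f(z^I),If(z^I))$ times $\tfrac12(\partial_{x_\ell}+\sigma\partial_{y_\ell})(F_{\Omega_1}^g\circ\mathscr{L}_\gamma)(z)$, which vanishes by Proposition \ref{pr-FOmega1f holo}.

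Combining the two contributions via the Leibniz rule, both terms vanish, so $\tfrac12(\partial_{x_\ell}+I\partial_{y_\ell})(f*g)(z^I)=0$ for every $\ell$; real differentiability of $(f*g)_I$ follows from that of $f_I$ and of the holomorphic map $F_{\Omega_1}^g\circ\mathscr{L}_\gamma$. Since $q$, $I$, and $\ell$ were arbitrary and the balls $B_I(q,r)$ cover $(\Omega_1)_I$, this shows $(f*g)_I$ is $I$-holomorphic on all of $(\Omega_1)_I$ for every $I\in\mathbb{S}$, hence $f*g\in\mathcal{SR}(\Omega_1)$. I expect the main obstacle to be the careful bookkeeping in the Leibniz step: one must confirm that the factor $(f(z^I),If(z^I))$ genuinely behaves as a constant row vector of the special form $(c,Ic)$ at each point when \eqref{eq-icic} is applied, and that the non-commutativity of $\mathbb{H}$ does not spoil the separation of the operator across the product. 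Once the algebraic identity \eqref{eq-icic} is correctly lined up with the holomorphy operator, the rest is routine verification.
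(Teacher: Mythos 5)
Your proposal is correct and follows essentially the same route as the paper's own proof: localize at a point $q=\gamma^I(1)$ via real-path-connectedness, express $f*g$ on a slice ball through \eqref{eq-f*g to z}, split the Cauchy--Riemann operator by the Leibniz rule, kill the left factor by slice regularity of $f$, and convert $I\,\partial_{y_\ell}$ into right-multiplication by $\sigma$ via \eqref{eq-icic} so that Proposition \ref{pr-FOmega1f holo} annihilates the right factor. The potential pitfall you flag -- that \eqref{eq-icic} requires the row vector to have the special form $(c,Ic)$ at each point -- is exactly the point the paper's computation relies on, and your handling of it matches the paper's argument.
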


\begin{proof}
	Consider $I$ as an element of $\mathbb{S}$ and let $q$ be a point in $\left(\Omega_1\right)_I$. There exists a complex number $z$ such that $q$ can be represented as $z^I$. Given that $\Omega_1$ is real-path-connected, we can find a path $\gamma$ in $\mathscr{P}(\mathbb{C}^n,\Omega_1)$ with $I$ in $\mathbb{S}(\Omega_1,\gamma)$, such that $\gamma^I$ lies in $\Omega_1$ and $q$ equals $\gamma^I(1)$. Referring to equations \eqref{eq-f*g to z}, \eqref{eq-icic}, and Proposition \ref{pr-FOmega1f holo}, the following derivation is obtained:
	\begin{equation*}
		\begin{aligned}
			&\frac{1}{2}\left(\frac{\partial}{\partial x\ell}+I\frac{\partial}{\partial y_\ell}\right)(fg)(z^I)
			\\=&\frac{1}{2}\left(\frac{\partial}{\partial x_\ell}+I\frac{\partial}{\partial y_\ell}\right)\left(f\left(z^I\right),If\left(z^I\right)\right) \left(F_{\Omega_1}^g\circ\mathscr{L}\gamma\right)(z)
			\\=&\left(\frac{1}{2}\left(\frac{\partial}{\partial x_\ell}+I\frac{\partial}{\partial y_\ell}\right)f\left(z^I\right),I\left[\frac{1}{2}\left(\frac{\partial}{\partial x_\ell}+I\frac{\partial}{\partial y_\ell}\right)\right]f\left(z^I\right)\right) \left(F_{\Omega_1}^g\circ\mathscr{L}\gamma\right)(z)
			\\ &+\left(f\left(z^I\right),If\left(z^I\right)\right)\left[\frac{1}{2}\left(\frac{\partial}{\partial x_\ell}+\sigma\frac{\partial}{\partial y_\ell}\right)\right]\left(F_{\Omega_1}^g\circ\mathscr{L}\gamma\right)(z)
			\\=&(0,I\cdot 0)\left(F_{\Omega_1}^g\circ\mathscr{L}_\gamma\right)(z)+\left(f\left(z^I\right),If\left(z^I\right)\right)\cdot 0=0.
		\end{aligned}
	\end{equation*}
	This result implies that $(fg)_I$ is holomorphic for every $I$ in $\mathbb{S}$. Consequently, it is established that $fg$ is slice regular.
\end{proof}

\begin{theorem}
	Suppose $\Omega$ is a subset of $\mathbb{H}_s^n$ that exhibits self-stem-preserving properties. In such a case, the structure $(\mathcal{SR}(\Omega), +, *)$ forms an associative unitary real algebra.
\end{theorem}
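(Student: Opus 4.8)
The final statement asserts that for a self-stem-preserving domain $\Omega$, the triple $(\mathcal{SR}(\Omega), +, *)$ is an associative unitary real algebra. The plan is to leverage the already-established algebra structure on path-slice functions from Theorem \ref{tm-aura} and show that slice regular functions form a subalgebra. Since $\Omega$ is self-stem-preserving, it is simultaneously real-path-connected and $\Omega$-stem-preserving, so Theorem \ref{tm-aura} applies with $\Omega_1 = \Omega_2 = \Omega$ to give that $(\mathcal{PS}(\Omega), +, *)$ is an associative unitary real algebra. Because every slice regular function on a slice-open set is path-slice (as recalled from \cite[Corollary 5.9]{Dou2023002}), we have the inclusion $\mathcal{SR}(\Omega) \subseteq \mathcal{PS}(\Omega)$.

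First I would verify that $\mathcal{SR}(\Omega)$ is closed under addition and real scalar multiplication; this is immediate since the Cauchy--Riemann operator $\frac{1}{2}\left(\frac{\partial}{\partial x_\ell} + I\frac{\partial}{\partial y_\ell}\right)$ is real-linear, so a real-linear combination of functions annihilated by it on each slice is again annihilated. Next, and this is the crucial closure property, I would invoke Theorem \ref{thm-sr} with $\Omega_1 = \Omega_2 = \Omega$: since $\Omega$ self-stem-preserving means it is both real-path-connected and $\Omega$-stem-preserving, for $f, g \in \mathcal{SR}(\Omega)$ we obtain $f * g \in \mathcal{SR}(\Omega)$. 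Thus $\mathcal{SR}(\Omega)$ is closed under $+$, real scaling, and $*$, so it is a subalgebra of $(\mathcal{PS}(\Omega), +, *)$.

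The associativity of $*$ on $\mathcal{SR}(\Omega)$ is inherited directly from its associativity on the ambient algebra $\mathcal{PS}(\Omega)$ granted by Theorem \ref{tm-aura}, since the operation restricted to the subset agrees with the ambient one. For the unitality, I would exhibit the constant function $\mathbbm{1}$ (with value $1 \in \mathbb{H}$) as the multiplicative identity: this function is trivially slice regular, lies in $\mathcal{SR}(\Omega)$, and serves as the unit of $\mathcal{PS}(\Omega)$ by Theorem \ref{tm-aura}; hence it remains a two-sided identity on the subalgebra. Collecting these facts yields that $(\mathcal{SR}(\Omega), +, *)$ is an associative unitary real algebra.

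The substantive content has already been discharged by Theorem \ref{thm-sr}, which is where the real work lies; the present statement is essentially a corollary packaging the closure under $*$ together with the inherited algebraic axioms. The only point requiring genuine care is confirming that the $*$-product and unit used in Theorem \ref{tm-aura} for $\mathcal{PS}(\Omega)$ coincide with those implicitly intended for $\mathcal{SR}(\Omega)$, so that restriction to the subalgebra is legitimate rather than introducing a different operation; I would note that the definition of $f * g$ in \eqref{eq-starproduct def} depends only on $f \in \mathcal{PS}(\Omega_1)$ and $g \in \mathcal{PS}(\Omega_2)$ through their path-slice data, so no ambiguity arises. I expect no serious obstacle, as the main difficulty was already overcome in establishing slice regularity of the $*$-product.
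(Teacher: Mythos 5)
Your proposal is correct and follows essentially the same route as the paper: the paper's proof is a one-line appeal to Theorem \ref{tm-aura} together with Theorem \ref{thm-sr}, which is exactly the combination you use. Your additional verifications (closure of $\mathcal{SR}(\Omega)$ under addition and real scalars, the constant function $1$ as unit, and the inclusion $\mathcal{SR}(\Omega)\subseteq\mathcal{PS}(\Omega)$ via path-sliceness of slice regular functions) merely make explicit the routine subalgebra details the paper leaves implicit.
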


\begin{proof}
	The assertion of this theorem follows immediately from the application of Theorem \ref{tm-aura} in conjunction with Theorem \ref{thm-sr}.
\end{proof}

 In conclusion, our study has established that the $*$-product preserves the slice regularity of functions, thereby confirming that this class of functions forms an associative unitary real algebra. Our investigation into the star-product of slice regular functions over several  quaternionic variables has enriched our comprehension of their algebraic and geometric properties. Moreover, it has opened avenues for future research and potential applications.

\bibliographystyle{plain}
\bibliography{mybibfile}

@book {MR3967697,
    AUTHOR = {Colombo, Fabrizio and Gantner, Jonathan},
     TITLE = {Quaternionic closed operators, fractional powers and
              fractional diffusion processes},
    SERIES = {Operator Theory: Advances and Applications},
    VOLUME = {274},
 PUBLISHER = {Birkh\"{a}user/Springer, Cham},
      YEAR = {2019},
     PAGES = {viii+322},
      ISBN = {978-3-030-16408-9; 978-3-030-16409-6},
   MRCLASS = {47-02 (35R11)},
  MRNUMBER = {3967697},
MRREVIEWER = {Michael Shapiro},
       DOI = {10.1007/978-3-030-16409-6},
       URL = {https://doi.org/10.1007/978-3-030-16409-6},
}

@book {MR3887616,
    AUTHOR = {Colombo, Fabrizio and Gantner, Jonathan and Kimsey, David P.},
     TITLE = {Spectral theory on the {S}-spectrum for quaternionic
              operators},
    SERIES = {Operator Theory: Advances and Applications},
    VOLUME = {270},
 PUBLISHER = {Birkh\"{a}user/Springer, Cham},
      YEAR = {2018},
     PAGES = {ix+356},
      ISBN = {978-3-030-03073-5; 978-3-030-03074-2},
   MRCLASS = {47-02 (47A10 47S20)},
  MRNUMBER = {3887616},
MRREVIEWER = {Jos\'{e} Aguayo-Garrido},
}

@article {Gentili2006001,
    AUTHOR = {Gentili, Graziano and Struppa, Daniele C.},
     TITLE = {A new approach to {C}ullen-regular functions of a quaternionic
              variable},
   JOURNAL = {C. R. Math. Acad. Sci. Paris},
  FJOURNAL = {Comptes Rendus Math\'ematique. Acad\'emie des Sciences. Paris},
    VOLUME = {342},
      YEAR = {2006},
    NUMBER = {10},
     PAGES = {741--744},
      ISSN = {1631-073X},
   MRCLASS = {30G35},
  MRNUMBER = {2227751},
       DOI = {10.1016/j.crma.2006.03.015},
       URL = {https://doi.org/10.1016/j.crma.2006.03.015},
}

@article {Gentili2007001,
    AUTHOR = {Gentili, Graziano and Struppa, Daniele C.},
     TITLE = {A new theory of regular functions of a quaternionic variable},
   JOURNAL = {Adv. Math.},
  FJOURNAL = {Advances in Mathematics},
    VOLUME = {216},
      YEAR = {2007},
    NUMBER = {1},
     PAGES = {279--301},
      ISSN = {0001-8708},
   MRCLASS = {30G35 (30B10 30C10)},
  MRNUMBER = {2353257},
MRREVIEWER = {Alessandro Perotti},
       DOI = {10.1016/j.aim.2007.05.010},
       URL = {https://doi.org/10.1016/j.aim.2007.05.010},
}

@article {Gentili2008001,
    AUTHOR = {Gentili, Graziano and Stoppato, Caterina},
     TITLE = {Zeros of regular functions and polynomials of a quaternionic
              variable},
   JOURNAL = {Michigan Math. J.},
  FJOURNAL = {Michigan Mathematical Journal},
    VOLUME = {56},
      YEAR = {2008},
    NUMBER = {3},
     PAGES = {655--667},
      ISSN = {0026-2285},
   MRCLASS = {30G35 (16K99 30C15)},
  MRNUMBER = {2490652},
MRREVIEWER = {Ricardo Abreu-Blaya},
       DOI = {10.1307/mmj/1231770366},
       URL = {https://doi.org/10.1307/mmj/1231770366},
}

@article {Colombo2009001,
    AUTHOR = {Colombo, Fabrizio and Gentili, Graziano and Sabadini, Irene
              and Struppa, Daniele},
     TITLE = {Extension results for slice regular functions of a
              quaternionic variable},
   JOURNAL = {Adv. Math.},
    VOLUME = {222},
      YEAR = {2009},
    NUMBER = {5},
     PAGES = {1793--1808},
      ISSN = {0001-8708},
   MRCLASS = {30G35},
  MRNUMBER = {2555912},
MRREVIEWER = {Alessandro Perotti},
       DOI = {10.1016/j.aim.2009.06.015},
       URL = {https://doi.org/10.1016/j.aim.2009.06.015},
}

@article {Colombo2009002,
    AUTHOR = {Colombo, Fabrizio and Sabadini, Irene and Struppa, Daniele C.},
     TITLE = {Slice monogenic functions},
   JOURNAL = {Israel J. Math.},
  FJOURNAL = {Israel Journal of Mathematics},
    VOLUME = {171},
      YEAR = {2009},
     PAGES = {385--403},
      ISSN = {0021-2172},
   MRCLASS = {30G35},
  MRNUMBER = {2520116},
MRREVIEWER = {Norman G\"urlebeck},
       DOI = {10.1007/s11856-009-0055-4},
       URL = {https://doi.org/10.1007/s11856-009-0055-4},
}

@article {Gentili2010001,
    AUTHOR = {Gentili, Graziano and Struppa, Daniele C.},
     TITLE = {Regular functions on the space of {C}ayley numbers},
   JOURNAL = {Rocky Mountain J. Math.},
  FJOURNAL = {The Rocky Mountain Journal of Mathematics},
    VOLUME = {40},
      YEAR = {2010},
    NUMBER = {1},
     PAGES = {225--241},
      ISSN = {0035-7596},
   MRCLASS = {30G35 (17A35)},
  MRNUMBER = {2607115},
MRREVIEWER = {J. Carlos Guti\'errez Fern\'andez},
       DOI = {10.1216/RMJ-2010-40-1-225},
       URL = {https://doi.org/10.1216/RMJ-2010-40-1-225},
}

@article {Ghiloni2011001,
    AUTHOR = {Ghiloni, Riccardo and Perotti, Alessandro},
     TITLE = {Slice regular functions on real alternative algebras},
   JOURNAL = {Adv. Math.},
  FJOURNAL = {Advances in Mathematics},
    VOLUME = {226},
      YEAR = {2011},
    NUMBER = {2},
     PAGES = {1662--1691},
      ISSN = {0001-8708},
   MRCLASS = {30G35},
  MRNUMBER = {2737796},
MRREVIEWER = {Hennie De Schepper},
       DOI = {10.1016/j.aim.2010.08.015},
       URL = {https://doi.org/10.1016/j.aim.2010.08.015},
}

@article {Alpay2012001,
    AUTHOR = {Alpay, Daniel and Colombo, Fabrizio and Sabadini, Irene},
     TITLE = {Schur functions and their realizations in the slice
              hyperholomorphic setting},
   JOURNAL = {Integral Equations Operator Theory},
  FJOURNAL = {Integral Equations and Operator Theory},
    VOLUME = {72},
      YEAR = {2012},
    NUMBER = {2},
     PAGES = {253--289},
      ISSN = {0378-620X},
   MRCLASS = {47B32 (30G35 93B15)},
  MRNUMBER = {2872478},
MRREVIEWER = {William Thomas Ross},
       DOI = {10.1007/s00020-011-1935-7},
       URL = {https://doi.org/10.1007/s00020-011-1935-7},
}

@article {Colombo2012002,
    AUTHOR = {Colombo, Fabrizio and Sabadini, Irene and Struppa, Daniele C.},
     TITLE = {Algebraic properties of the module of slice regular functions
              in several quaternionic variables},
   JOURNAL = {Indiana Univ. Math. J.},
  FJOURNAL = {Indiana University Mathematics Journal},
    VOLUME = {61},
      YEAR = {2012},
    NUMBER = {4},
     PAGES = {1581--1602},
      ISSN = {0022-2518},
   MRCLASS = {30G35 (32A99)},
  MRNUMBER = {3085619},
MRREVIEWER = {Alessandro Perotti},
       DOI = {10.1512/iumj.2012.61.4978},
       URL = {https://doi.org/10.1512/iumj.2012.61.4978},
}

@article {Ghiloni2012001,
	AUTHOR = {Ghiloni, Riccardo and Perotti, Alessandro},
	TITLE = {Slice regular functions of several {C}lifford variables},
	JOURNAL = {AIP Conference Proceedings},
	FJOURNAL = {AIP Conference Proceedings},
	VOLUME = {1493, 734},
	YEAR = {2012},
	DOI = {10.1063/1.4765569},
	URL = {https://doi.org/10.1063/1.4765569},
}

@article {Alpay2015001,
    AUTHOR = {Alpay, Daniel and Colombo, Fabrizio and Gantner, Jonathan and
              Sabadini, Irene},
     TITLE = {A new resolvent equation for the {$S$}-functional calculus},
   JOURNAL = {J. Geom. Anal.},
  FJOURNAL = {Journal of Geometric Analysis},
    VOLUME = {25},
      YEAR = {2015},
    NUMBER = {3},
     PAGES = {1939--1968},
      ISSN = {1050-6926,1559-002X},
   MRCLASS = {47A60 (30G35 47S10)},
  MRNUMBER = {3358079},
MRREVIEWER = {Jan\ Rozendaal},
       DOI = {10.1007/s12220-014-9499-9},
       URL = {https://doi.org/10.1007/s12220-014-9499-9},
}

@article {Ren2017001,
    AUTHOR = {Ren, Guangbin and Wang, Xieping},
     TITLE = {Growth and distortion theorems for slice monogenic functions},
   JOURNAL = {Pacific J. Math.},
  FJOURNAL = {Pacific Journal of Mathematics},
    VOLUME = {290},
      YEAR = {2017},
    NUMBER = {1},
     PAGES = {169--198},
      ISSN = {0030-8730},
   MRCLASS = {30G35 (30A05 30C45)},
  MRNUMBER = {3673083},
       DOI = {10.2140/pjm.2017.290.169},
       URL = {https://doi.org/10.2140/pjm.2017.290.169},
}

@article {Ren2017002,
    AUTHOR = {Ren, Guangbin and Wang, Xieping},
     TITLE = {Julia theory for slice regular functions},
   JOURNAL = {Trans. Amer. Math. Soc.},
  FJOURNAL = {Transactions of the American Mathematical Society},
    VOLUME = {369},
      YEAR = {2017},
    NUMBER = {2},
     PAGES = {861--885},
      ISSN = {0002-9947},
   MRCLASS = {30G35 (30C80 30D40 31B25 32A40)},
  MRNUMBER = {3572257},
MRREVIEWER = {Caterina Stoppato},
       DOI = {10.1090/tran/6717},
       URL = {https://doi.org/10.1090/tran/6717},
}

@article {Wang2017001,
	AUTHOR = {Wang, Xieping and Ren, Guangbin},
	TITLE = {Boundary {S}chwarz lemma for holomorphic self-mappings of
	strongly pseudoconvex domains},
	JOURNAL = {Complex Anal. Oper. Theory},
	FJOURNAL = {Complex Analysis and Operator Theory},
	VOLUME = {11},
	YEAR = {2017},
	NUMBER = {2},
	PAGES = {345--358},
	ISSN = {1661-8254},
	MRCLASS = {32H02 (30C80 32A10 32A40)},
	MRNUMBER = {3605232},
	MRREVIEWER = {Harold P. Boas},
	DOI = {10.1007/s11785-016-0552-5},
	URL = {https://doi.org/10.1007/s11785-016-0552-5},
}

@article{Ghiloni2020001,
    AUTHOR = {Ghiloni, Riccardo and Perotti, Alessandro},
     TITLE = {Slice regular functions in several variables},
   JOURNAL = {Math. Z.},
  FJOURNAL = {Mathematische Zeitschrift},
    VOLUME = {302},
      YEAR = {2022},
    NUMBER = {1},
     PAGES = {295--351},
      ISSN = {0025-5874,1432-1823},
   MRCLASS = {30G35 (17D05 32A30)},
  MRNUMBER = {4462677},
MRREVIEWER = {Pan\ Lian},
       DOI = {10.1007/s00209-022-03066-9},
       URL = {https://doi.org/10.1007/s00209-022-03066-9},
}

@article{Dou2021001,
    AUTHOR = {Dou, Xinyuan and Jin, Ming and Ren, Guangbin and Sabadini,
              Irene},
     TITLE = {A new approach to slice analysis via slice topology},
   JOURNAL = {Adv. Appl. Clifford Algebr.},
  FJOURNAL = {Advances in Applied Clifford Algebras},
    VOLUME = {31},
      YEAR = {2021},
    NUMBER = {5},
     PAGES = {Paper No. 67, 36},
      ISSN = {0188-7009,1661-4909},
   MRCLASS = {30G35},
  MRNUMBER = {4309797},
MRREVIEWER = {Baruch\ A.\ Schneider},
       DOI = {10.1007/s00006-021-01170-3},
       URL = {https://doi.org/10.1007/s00006-021-01170-3},
}

@article {Dou2023001,
    AUTHOR = {Dou, Xinyuan and Ren, Guangbin and Sabadini, Irene},
     TITLE = {Extension theorem and representation formula in
              non-axially-symmetric domains for slice regular functions},
   JOURNAL = {J. Eur. Math. Soc. (JEMS)},
  FJOURNAL = {Journal of the European Mathematical Society (JEMS)},
    VOLUME = {25},
      YEAR = {2023},
    NUMBER = {9},
     PAGES = {3665--3694},
      ISSN = {1435-9855,1435-9863},
   MRCLASS = {30G35 (32A30 32D05)},
  MRNUMBER = {4634680},
       DOI = {10.4171/jems/1260},
       URL = {https://doi.org/10.4171/jems/1260},
}

@article {Dou2023002,
    AUTHOR = {Dou, Xinyuan and Ren, Guangbin and Sabadini, Irene},
     TITLE = {A representation formula for slice regular functions over
              slice-cones in several variables},
   JOURNAL = {Ann. Mat. Pura Appl. (4)},
  FJOURNAL = {Annali di Matematica Pura ed Applicata. Series IV},
    VOLUME = {202},
      YEAR = {2023},
    NUMBER = {5},
     PAGES = {2421--2446},
      ISSN = {0373-3114,1618-1891},
   MRCLASS = {30G35 (17A30)},
  MRNUMBER = {4634271},
       DOI = {10.1007/s10231-023-01325-y},
       URL = {https://doi.org/10.1007/s10231-023-01325-y},
}

@article {Dou2023003,
    AUTHOR = {Dou, Xinyuan and Jin, Ming and Ren, Guangbin and Yang, Ting},
     TITLE = {Path-slice star-product on non-axially symmetric domains in several quaternionic variables},
      YEAR = {2024},
	Eprint = {arXiv:2401.04401},
}

@article {Gantner2020001,
    AUTHOR = {Gantner, Jonathan},
     TITLE = {Operator theory on one-sided quaternion linear spaces:
              intrinsic {$S$}-functional calculus and spectral operators},
   JOURNAL = {Mem. Amer. Math. Soc.},
  FJOURNAL = {Memoirs of the American Mathematical Society},
    VOLUME = {267},
      YEAR = {2020},
    NUMBER = {1297},
     PAGES = {iii+101},
      ISSN = {0065-9266},
      ISBN = {978-1-4704-4238-5; 978-1-4704-6393-9},
   MRCLASS = {47S05 (47A60 47B40)},
  MRNUMBER = {4195727},
MRREVIEWER = {Mohamed Barraa},
       DOI = {10.1090/memo/1297},
       URL = {https://doi.org/10.1090/memo/1297},
}

@article {MR4496722,
    AUTHOR = {Colombo, Fabrizio and Gantner, Jonathan and Kimsey, David P.
              and Sabadini, Irene},
     TITLE = {Universality property of the {$S$}-functional calculus,
              noncommuting matrix variables and {C}lifford operators},
   JOURNAL = {Adv. Math.},
  FJOURNAL = {Advances in Mathematics},
    VOLUME = {410},
      YEAR = {2022},
     PAGES = {Paper No. 108719, 39},
      ISSN = {0001-8708,1090-2082},
   MRCLASS = {47A10 (47A60 47S05)},
  MRNUMBER = {4496722},
MRREVIEWER = {M.\ Fashandi},
       DOI = {10.1016/j.aim.2022.108719},
       URL = {https://doi.org/10.1016/j.aim.2022.108719},
}

\end{document}